\documentclass[11pt]{article}

\usepackage[margin=1in]{geometry}
\usepackage{amsmath,amssymb,amsthm,mathtools}
\usepackage{enumitem}
\usepackage{xcolor}
\usepackage{microtype}
\usepackage{hyperref}

\hypersetup{
  colorlinks=true,
  linkcolor=blue!55!black,
  citecolor=blue!55!black,
  urlcolor=blue!55!black
}

\newtheorem{theorem}{Theorem}[section]
\newtheorem{proposition}[theorem]{Proposition}
\newtheorem{lemma}[theorem]{Lemma}
\newtheorem{corollary}[theorem]{Corollary}
\newtheorem{remark}[theorem]{Remark}
\newtheorem{definition}[theorem]{Definition}
\newtheorem{assumption}[theorem]{Assumption}
\newtheorem{example}[theorem]{Example}
\newtheorem{algorithm}[theorem]{Algorithm}

\DeclareMathOperator{\gph}{gph}
\DeclareMathOperator{\dom}{dom}
\DeclareMathOperator{\rge}{rge}
\DeclareMathOperator{\ri}{ri}
\DeclareMathOperator{\zer}{zer}
\DeclareMathOperator{\dist}{dist}
\DeclareMathOperator{\argmin}{argmin}
\DeclareMathOperator{\cl}{cl}

\DeclareMathOperator{\rec}{rec}

\newcommand{\HH}{\mathcal H}
\newcommand{\RR}{\mathbb R}

\newcommand{\eps}{\varepsilon}
\newcommand{\inner}[2]{\langle #1,#2\rangle}
\newcommand{\norm}[1]{\|#1\|}
\newcommand{\set}[1]{\{#1\}}
\newcommand{\To}{\rightrightarrows}

\title{Semidefinite Implementations of the Proximal Point Algorithm\\
       and Consequences for Progressive Decoupling in Optimization}

\author{
Xudong Li\thanks{School of Data Science, Fudan University, Shanghai, China. Email: \texttt{lixudong@fudan.edu.cn}.}
\and
R. Tyrrell Rockafellar\thanks{Department of Mathematics, University of
Washington, Seattle, WA 98195-4350, USA. Email: \texttt{rtr@uw.edu}.}
\and
Defeng Sun\thanks{Department of Applied Mathematics, The Hong Kong Polytechnic
University, Hung Hom, Hong Kong. Defeng Sun was supported by the Research Center for Intelligent Operations Research and  RGC Senior Research Fellow Scheme No. SRFS2223-5S02.
Email: \texttt{defeng.sun@polyu.edu.hk}.}
}
\date{Draft of September 20, 2026}

\begin{document}
\maketitle

\begin{abstract}
The proximal point algorithm for finding a zero of a maximal monotone
mapping is the engine that drives augmented Lagrangian methods and various
splitting schemes in convex optimization, where proximal terms that are 
merely positive semidefinite can be helpful in preconditioning.  Here 
a semidefinite extension of that algorithm is developed with new features 
which guarantee convergence, typically even linear convergence, in broader 
territory than previously foreseen.  Moreover, inexact minimization is 
allowed in the subproblems in a variable-metric implementation which might 
open up quasi-Newton-like approaches. 

The key to the extension is applying an advanced form of the positive
definite proximal point algorithm to a projected monotone mapping 
derived from the given mapping, and combining that with a completion step.  
For this to work, the maximality of the projected mapping is essential, but 
maximality is not automatic.  The circumstances that guarantee maximality 
are definitively analyzed and convenient criteria for maximality are 
thereby identified.  Consequences of the extended algorithm are worked out 
for application to convex inf-projection, augmented Lagrangian steps, and 
bracketed progressive decoupling based on Spingarn's partial inverse as a 
scheme for problem decomposition.

\end{abstract}

\noindent\textbf{Keywords.} proximal point algorithm; maximal monotone
mappings; semidefinite proximal terms; augmented Lagrangian method; progressive
decoupling.

\section{Introduction}

Let $T$ be maximal monotone on a finite-dimensional Hilbert
space $\HH$.  The proximal point algorithm (PPA) seeks a zero of $T$ through
solving inclusions of the form
\begin{equation}\label{eq:intro-semidefinite-ppa}
  0\in T(z^{k+1})+c_k^{-1}B_k(z^{k+1}-z^k),
\end{equation}
where $c_k>0$ and $B_k$ is self-adjoint.  When $B_k$ is positive definite,
the resolvent, namely the mapping from $z^k$ to $z^{k+1}$, is everywhere 
defined, and convergence under exact or summably inexact evaluation in the
face of metric variation is well understood
\cite{Rockafellar1976PPA,LiST2020LinearProgramming,Rockafellar2023VariableMetric}.
Positive semidefinite choices are nevertheless natural when only selected
primal or dual directions can be regularized without destroying a useful
decomposition.  The automatic well-posedness of the resolvent mapping in 
the positive definite setting is then lost:  maximal monotonicity of $T$, 
even together with $\zer T\ne\emptyset$, does not ensure 
\emph{implementability}, meaning the solvability of 
\eqref{eq:intro-semidefinite-ppa} for $z^{k+1}$ from every point $z^k\in\HH$. 
Nor does implementability entail uniqueness of $z^{k+1}$, since the 
regularization provided by $B_k$ is oblivious to directions tied to its 
kernel.  Two distinct questions must therefore be answered in contemplating 
any semidefinite extension of the proximal point algorithm.  What guarantees
its implementability and how can the aspects of the iterates in unregularized
directions be controlled?

In practice, the self-adjoint positive semidefinite operators $B_k$ can be 
supposed to have the same kernel and therefore the same range, those two
subspaces being orthogonally complementary to each other.  In denoting the 
common range by $H_*$ and the common kernel by $H_{**}$, we say that
the operators in this case are all \emph{aligned} with a specific
``splitting'' of $\HH$ into the direct sum $H_*\oplus H_{**}$ of two
smaller Hilbert spaces.  We show that each semidefinite proximal point 
step turns out then to reduce to a positive definite proximal point 
step for a ``projected'' mapping $T_*$ from $H_*$ into $H_*$ followed by 
a selection from a ``completion'' mapping $T_{**}$ from $H_*\times H_*$ into $H_{**}$.  
Our first main result gives an exact characterization of implementability 
in terms of this reduction:   implementability is equivalent to maximal 
monotonicity of $T_*$ as derived from $T$ and the splitting of $\HH$ into 
$H_*\oplus H_{**}$; it is independent of the particular choices of $B_k$ 
and $c_k$.  The component of $z^{k+1}$ in $H_*$, the ``active'' update, is 
uniquely determined, although the component of $z^{k+1}$ in $H_{**}$, the 
``completion'' update, might not be.  This reduction scheme is central to 
our analysis, but in algorithmic implementation can remain behind the 
scenes, hidden from view. 

Although maximal monotonicity of the projected mapping $T_*$ settles 
existence and uniqueness of the active update in $H_*$ and the 
nonemptiness of the image sets of $T_{**}$ in $H_{**}$ from which the 
completion update is a selection, it says nothing about potential uniqueness or
even boundedness of those sets, which we call ``completion fibers.''  Our 
second main result illuminates the global geometry of the completion
fibers:  they are closed convex sets that share a common recession cone.  
Consequently, one completion fiber is bounded if and only if every 
completion fiber is bounded.  The \emph{one-bounded-fiber} condition 
guarantees moreover, not only that every fiber is compact, but also that 
$T_{**}$ is locally bounded and $T_*$ is maximal monotone.  It can 
conveniently serve that way also as a criterion for implementability.
It provides control of the directions in which the proximal term fails
to regularize, without imposing a single-valuedness restriction on the 
semidefinite resolvent mapping in \eqref{eq:intro-semidefinite-ppa}. 

Several related studies formulate well-posedness of the resolvent directly 
through the degenerate resolvent $(c_kT+B_k)^{-1}B_k$.  Ha \cite{Ha1990} 
obtains full-domain and convergence results for a one-block generalized 
proximal map under a range-interior condition.  Bredies et al.\ 
\cite{BrediesChencheneLorenzNaldi2022} work with an admissible fixed
preconditioner, for which the full resolvent is everywhere defined and
single-valued.  Variable-metric and inexact degenerate schemes have also 
been studied under additional full-space regularity assumptions 
\cite{LorenzMarquardtNaldi2025,MarquesAlvesLorenzNaldi2026}.  More recently, 
\cite{XueZhang2026Restricted} characterizes full-domain solvability of a 
fixed degenerate resolvent through maximal monotonicity of a restricted 
inverse and analyzes conditions for its single-valuedness.  Our starting 
point is different.  Rather than treating the full degenerate resolvent as 
the primary object, we decompose each semidefinite proximal step from the 
outset into an active proximal update and a compatible kernel completion,
as already indicated. This makes implementability a property solely of the 
active dynamics, while boundedness and uniqueness of full-space completions 
become separate questions.

The characterization also admits useful verification routes. A classical relative-interior condition and a piecewise-polyhedral projection result \cite{LiRockafellarSun2026Polyhedral} provide sufficient conditions for maximal monotonicity of $T_*$. In particular, if $T$ is maximal monotone and piecewise polyhedral with a nonempty zero set, then implementability holds for every fixed splitting. Once implementability is established, the active iteration is simply a positive definite variable-metric PPA for $T_*$. Under the standing zero-existence and metric assumptions, summable active residual errors yield convergence of the active iterates to a zero of $T_*$. Control of the full-space completions is provided separately by the one-bounded-fiber condition: it makes every completion fiber compact and locally stabilizes the completion mapping, and consequently yields boundedness of the full iterates. Both Ha's range-interior condition \cite{Ha1990} and full-space admissibility \cite{BrediesChencheneLorenzNaldi2022} imply the one-bounded-fiber condition, but the converse fails, i.e., bounded-fiber control still permits nonunique completions. Finally, under a stronger relative residual rule, metric subregularity of $T_*$ yields Q-linear convergence of the algorithm.

Two convex specializations make the reduction explicit.  For $T=\partial f$ with $f$ being proper closed convex,
a nonempty bounded partial-minimizer set at one active point makes the
inf-projection of $f$ closed, proper, and convex. Its subdifferential is exactly $T_*$,
and the completion fibers are the corresponding partial-minimizer sets.  For
the convex Lagrangian saddle mapping, an augmented Lagrangian step is identified as a
semidefinite proximal step.  When the dual objective is proper,
implementability is equivalent to exact primal realizability of every dual
proximal step.

Finally, we apply the framework to bracketed progressive decoupling through
Spingarn's partial inverse \cite{Spingarn1983,Rockafellar2019Progressive,Rockafellar2019ProgressiveD,
Rockafellar2024PMM}.  The brackets retain proximal regularization only on a
prescribed active subspace, producing a  semidefinite PPA for the
partial inverse and an equivalent realization in the original variables.
Its implementability is characterized by maximal monotonicity of the
projected partial inverse.  For linearly constrained block-separable
piecewise linear--quadratic models admitting a KKT pair, this condition holds
for every pair of brackets.  

The remainder of the paper is organized as follows.
Section~\ref{sec:projection} introduces implementability, derives the
active--completion reduction, and gives its projected-maximality
characterization.
Section~\ref{sec:conditions} gives conditions for projected maximality,
studies completion stability, and compares the one-bounded-fiber condition
with Ha's range condition and full-space admissibility.
Section~\ref{sec:convergence} develops the active and full-space convergence
conclusions and then treats convex inf-projection and exact augmented
Lagrangian steps.
Section~\ref{sec:applications} applies the framework to bracketed progressive
decoupling.
Section~\ref{sec:conclusion} concludes the paper.

\medskip

\noindent\textbf{Notation.}
All spaces are finite-dimensional Hilbert spaces.  The ambient space $\HH$
has inner product $\inner{\cdot}{\cdot}$ and norm $\norm{\cdot}$.  For an orthogonal decomposition $\HH=H_*\oplus H_{**}$, we use the
canonical isometric identification
\[
  z=z_*+z_{**}\longleftrightarrow(z_*,z_{**})\in H_*\times H_{**},
\]
and regard $H_*$ and $H_{**}$ as $H_*\times\{0\}$ and
$\{0\}\times H_{**}$, respectively.  
We use
the standard notation $\gph T$, $\dom T$, $\rge T$, $T^{-1}$, and
$\zer T:=\set{z\mid0\in T(z)}$ for a set-valued mapping $T$, and $\ri C$,
$N_C$, and $\dist(\cdot,C)$ for a closed convex set $C$. The recession cone of a nonempty closed convex set $C\subset \HH$ is
$\rec C:=\set{d\in \HH \mid C+td\subset C\ \forall t\ge0}$.

Let $X$ and $Y$ be finite-dimensional Hilbert spaces. 
For a self-adjoint positive semidefinite linear operator $G\succeq0$ on $X$, we write
\(
  \norm{u}_G:=\sqrt{\inner{Gu}{u}}
\)
for the induced seminorm. When $G\succ0$, this is a norm, and we set
\(
  \dist_G(x,C):=\inf_{v\in C}\norm{x-v}_G,
\)
where an infimum over the empty set is $+\infty$.
For $G\succ0$ on $X$, a mapping $F:X\To X$ is \emph{metrically
subregular at $\bar x$ for $\bar y$ in the $G$-metric} if there exist a
neighborhood $N$ of $(\bar x,\bar y) \in \gph F$ and $a\in(0,+\infty)$ such that
\[
  \dist_G\bigl(x,F^{-1}(\bar y)\bigr)
  \le a\norm{y-\bar y}_{G^{-1}}
  \qquad\text{for every }(x,y)\in N\cap\gph F.
\]
The infimum of all such local constants is denoted by
$\operatorname{subreg}_G(F;\bar x\mid\bar y)$, with value $+\infty$ when
metric subregularity fails; see
\cite[(2.16)--(2.17)]{Rockafellar2023VariableMetric}.
A set-valued mapping
$F:X\To Y$ is \emph{locally bounded at} $\bar x\in\dom F$ if
$\bigcup_{x\in U\cap\dom F}F(x)$ is bounded for some neighborhood $U$ of
$\bar x$; it is locally bounded if this holds throughout $\dom F$.

For a proper closed
convex function $h$, its horizon function is
\[
  h^\infty(d):=\lim_{t\to\infty}
  \frac{h(\bar x+td)-h(\bar x)}{t},
  \qquad \bar x\in\dom h,
\]
independently of $\bar x$ \cite[Theorem~3.21]{RockafellarWets1998}.
A function $\varphi:X\times Y\to(-\infty,+\infty]$ is
\emph{level-bounded in $x$ locally uniformly in $y$} \cite[Definition~1.16]{RockafellarWets1998} if, for every
$\bar y\in Y$ and $\alpha\in\mathbb R$, there is a neighborhood $V$ of
$\bar y$ for which
\[
  \bigcup_{y\in V}\set{x\in X\mid\varphi(x,y)\le\alpha}
  \quad\text{is bounded}.
\]

\section{Implementability and projected reduction}\label{sec:projection}

Fix a monotone mapping $T:\HH\To\HH$.  Throughout, we assume that the self-adjoint
positive semidefinite operators $B_k$ share a common range and kernel:
\[
  H_*:=\rge B_k,
  \qquad
  H_{**}:=\ker B_k,
  \qquad
  \HH=H_*\oplus H_{**}.
\]
Let $P_*$ and $P_{**}$ be the corresponding
orthogonal projections.  For $z\in\HH$, write
$z_*=P_*z$ and $z_{**}=P_{**}z$, so that
$z=z_*+z_{**}$ and, under the convention above, $z=(z_*,z_{**})$.
We call a self-adjoint positive semidefinite operator $B$ \emph{aligned} with
this splitting if
\[
  \rge B=H_*,
  \qquad
  \ker B=H_{**}.
\]
For such a $B$, 
we write the restriction
$B_{*}:=B|_{H_*}$, which is self-adjoint and positive definite with
\(  B(z_*,z_{**})=(B_{*}z_*,0). \)

For a fixed stepsize and aligned proximal operator, the basic question is
whether the semidefinite proximal inclusion can be solved from every current
point.

\begin{definition}[Implementability]\label{def:implementability}
Fix $c>0$ and an aligned operator $B$.  The corresponding semidefinite 
proximal step for $T$, abbreviated as the $(c,B)$-step, is called 
implementable if, for every $x\in\HH$, the inclusion
\[
  0\in T(z)+c^{-1}B(z-x)
\]
has at least one solution $z\in\HH$.  The semidefinite proximal point 
algorithm with aligned operators $B_k$ is called implementable if the 
$(c_k,B_k)$-step \eqref{eq:intro-semidefinite-ppa} is implementable for 
every $c_k>0$ and choice of $B_k$.
\end{definition}

To characterize this property, 
we start from the following block form of an
aligned $B$: for $z,x\in\HH$,
\begin{equation}\label{eq:full-step-graph-form}
  0\in T(z)+c^{-1}B(z-x)
  \quad\Longleftrightarrow\quad
  \left(-c^{-1}B_*(z_*-x_*),0\right)
  \in T(z_*,z_{**}).
\end{equation}
Relation~\eqref{eq:full-step-graph-form} leads to the projected mapping
$T_*:H_*\To H_*$ and the completion mapping
$T_{**}:H_*\times H_*\To H_{**}$ defined by
\begin{equation}\label{eq:Tstar-def}
  w_*\in T_*(z_*)
  \quad\Longleftrightarrow\quad
  \exists z_{**}\in H_{**}\ \hbox{such that}\ (w_*,0)\in T(z_*,z_{**}),
\end{equation}
and
\begin{equation}\label{eq:Tstarstar-def}
  T_{**}(z_*,w_*):=
  \set{z_{**}\in H_{**}\mid (w_*,0)\in T(z_*,z_{**})}.
\end{equation}
The following theorem makes the active--completion decomposition in
\eqref{eq:full-step-graph-form} precise.

\begin{theorem}\label{thm:step-reduction}
Let $c>0$ and let $B$ be aligned with the splitting $\HH=H_*\oplus H_{**}$.  For
$z,x\in\HH$, write $z=(z_*,z_{**})$ and $x=(x_*,x_{**})$.  Then the
following are equivalent:
\begin{enumerate}[label=(\alph*)]
  \item The full-space inclusion holds:
  \[
    0\in T(z)+c^{-1}B(z-x) \quad \mbox{ or equivalently } \quad z\in (B + cT)^{-1} Bx.
  \]
  \item The active and completion relations hold:
  \begin{align*}
    0 \in T_*(z_*)+c^{-1}B_*(z_*-x_*),\qquad 
    z_{**} \in
    T_{**}\left(z_*,-c^{-1}B_*(z_*-x_*)\right).
  \end{align*}
\end{enumerate}
Moreover, the active relation is equivalent to 
\(
z_* \in (I + c B_*^{-1} T_*)^{-1}x_*.
\)
\end{theorem}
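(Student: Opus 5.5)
The proof is a direct unwinding of definitions, carried out in three short steps.

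\emph{Step 1: the block form.} I would first verify \eqref{eq:full-step-graph-form}. Since $B$ is aligned, $B(z-x)=(B_*(z_*-x_*),0)$ under the identification $\HH=H_*\times H_{**}$. So $0\in T(z)+c^{-1}B(z-x)$ means exactly that the vector $-c^{-1}B(z-x)=(-c^{-1}B_*(z_*-x_*),0)$ belongs to $T(z_*,z_{**})$. For the alternative formulation in (a), note that $0\in T(z)+c^{-1}B(z-x)$ is equivalent to $Bx\in Bz+cT(z)=(B+cT)(z)$, after multiplying by $c>0$. That in turn is $z\in(B+cT)^{-1}Bx$.

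\emph{Step 2: (a)$\Leftrightarrow$(b).} Put $w_*:=-c^{-1}B_*(z_*-x_*)\in H_*$. This lies in $H_*$ because $B_*$ maps $H_*$ into $H_*$.
\begin{itemize}
\item For (a)$\Rightarrow$(b): Step 1 gives $(w_*,0)\in T(z_*,z_{**})$. By \eqref{eq:Tstarstar-def} this is precisely $z_{**}\in T_{**}(z_*,w_*)$. By \eqref{eq:Tstar-def}, with $z_{**}$ itself as the witness, it also gives $w_*\in T_*(z_*)$, i.e.\ $0\in T_*(z_*)+c^{-1}B_*(z_*-x_*)$.
\item For (b)$\Rightarrow$(a): the completion relation $z_{**}\in T_{**}(z_*,w_*)$ already says $(w_*,0)\in T(z_*,z_{**})$, and Step 1 converts this back to (a).
\end{itemize}
The active relation is therefore implied by the completion relation. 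It is listed in (b) only to exhibit the decomposition.

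\emph{Step 3: the resolvent form.} $B_*$ is self-adjoint positive definite on $H_*$, hence invertible. So $0\in T_*(z_*)+c^{-1}B_*(z_*-x_*)$ is equivalent to $B_*x_*\in B_*z_*+cT_*(z_*)$. Applying $B_*^{-1}$, which is a linear bijection and so preserves set membership in both directions, this becomes $x_*\in z_*+cB_*^{-1}T_*(z_*)=(I+cB_*^{-1}T_*)(z_*)$. That is, $z_*\in(I+cB_*^{-1}T_*)^{-1}x_*$.

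There is no real obstacle. The only points needing care are the bookkeeping of the identification $z=(z_*,z_{**})$ and confirming that $B(z-x)$ has zero $H_{**}$-component, which follows from $\rge B=H_*$. No maximality of $T$ or $T_*$ is used, since the statement concerns individual points and not solvability.
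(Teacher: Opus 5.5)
Your proof is correct and is the same argument as the paper's, which says only that (a)$\Leftrightarrow$(b) is immediate from the block form of the aligned operator together with the definitions of $T_*$ and $T_{**}$, and that the resolvent form follows by direct calculation. You have written out those steps in full, including the observation that the completion relation already implies the active relation.
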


\begin{proof}
  The equivalence of \emph{(a)} and \emph{(b)} is immediate from the definitions of $T_*$ and $T_{**}$ in \eqref{eq:Tstar-def} and \eqref{eq:Tstarstar-def}, and \eqref{eq:full-step-graph-form}. The last assertion follows from direct calculations.
\end{proof}

Definition~\ref{def:implementability} and
Theorem~\ref{thm:step-reduction} therefore give, for given $c>0$ and
aligned $B$,
\[
  \text{the $(c,B)$-step is implementable}
  \quad\Longleftrightarrow\quad
  \dom\left(I+cB_*^{-1}T_*\right)^{-1}=H_*.
\]
Although the active resolvent depends on $c$ and $B$, the property that it
has full domain does not.  Applying Minty's theorem yields the following characterization.

\begin{theorem}
\label{thm:implementability-equivalence}
Let $T:\HH\To\HH$ be monotone. Fix a splitting
$\HH=H_*\oplus H_{**}$, and define $T_*$ by \eqref{eq:Tstar-def}.  Then the
following are equivalent:
\begin{enumerate}[label=(\alph*)]
  \item For some $c>0$ and some aligned $B$, the $(c,B)$-step for $T$ is
  implementable.
  \item The projected mapping $T_*:H_*\To H_*$ is maximal monotone.
  \item For every $c>0$ and every aligned $B$, the $(c,B)$-step for $T$ is
  implementable, i.e., the aligned semidefinite proximal scheme for $T$ is implementable. 
\end{enumerate}
\end{theorem}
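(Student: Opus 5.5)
The plan is to reduce everything to Minty's theorem in the positive definite metric induced by $B_*$ on $H_*$. By Theorem~\ref{thm:step-reduction} and the displayed consequence after it, the $(c,B)$-step is implementable if and only if $\dom(I+cB_*^{-1}T_*)^{-1}=H_*$, that is, $\rge(I+cB_*^{-1}T_*)=H_*$. Equivalently, since $B_*$ is invertible on $H_*$, this says $\rge(B_*+cT_*)=H_*$. The first step is to record that $T_*$ is monotone on $H_*$ whenever $T$ is. If $w_*\in T_*(z_*)$ and $w_*'\in T_*(z_*')$ with witnesses $z_{**},z_{**}'$, then monotonicity of $T$ at $(z_*,z_{**})$ and $(z_*',z_{**}')$ with values $(w_*,0)$ and $(w_*',0)$ gives $\inner{w_*-w_*'}{z_*-z_*'}\ge0$, because the $H_{**}$-components of the values vanish.

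Next I will work on the Hilbert space $H_*$ equipped with the inner product $\inner{u}{v}_{B_*}:=\inner{B_*u}{v}$. In this geometry the mapping $S:=B_*^{-1}T_*$ is monotone, since $\inner{B_*^{-1}w-B_*^{-1}w'}{z-z'}_{B_*}=\inner{w-w'}{z-z'}\ge0$. Moreover, $S$ is maximal monotone in the $B_*$-geometry exactly when $T_*$ is maximal monotone in the original geometry. Indeed, a pair $(z,v)$ is monotonically related to $\gph S$ in the $B_*$-inner product if and only if $(z,B_*v)$ is monotonically related to $\gph T_*$ in the original one, so maximal extensions correspond under the bijection $(z,v)\mapsto(z,B_*v)$. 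Finite dimensionality makes the two norms equivalent, so no topological issues arise.

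Now I apply Minty's theorem in $(H_*,\inner{\cdot}{\cdot}_{B_*})$. A monotone $S$ is maximal if and only if $\rge(I+\lambda S)=H_*$ for some $\lambda>0$, and then this holds for every $\lambda>0$. Taking $\lambda=c$ gives the chain of implications. (a)$\Rightarrow$(b): surjectivity of $I+cB_*^{-1}T_*$ for one pair $(c,B)$ makes $S$ maximal in the $B_*$-geometry, hence $T_*$ maximal. (b)$\Rightarrow$(c): for any aligned $B$ and any $c>0$, maximality of $T_*$ transfers to maximality of $B_*^{-1}T_*$ in that $B_*$-geometry, and Minty gives full range of $I+cB_*^{-1}T_*$. (c)$\Rightarrow$(a) is trivial.

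The only point needing care is the metric transfer of maximality in the second paragraph. One must verify precisely that $\inner{\cdot}{\cdot}_{B_*}$-monotone relatedness of $(z,v)$ matches ordinary monotone relatedness of $(z,B_*v)$, which follows from the identity used there. An alternative that avoids the change of metric is to use the form $\rge(B_*+cT_*)=H_*$ directly, via the known result that for monotone $T_*$ and $B_*\succ0$ this surjectivity is equivalent to maximality.
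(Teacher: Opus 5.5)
Your proof is correct and is essentially the paper's argument. The paper substitutes $\xi=B_*^{1/2}z_*$ and applies Minty's theorem to $\widehat T(\xi)=B_*^{-1/2}T_*(B_*^{-1/2}\xi)$ in the standard inner product, which is your $B_*$-geometry argument carried over by the isometry $B_*^{1/2}$; the monotonicity check for $T_*$, the transfer of maximality, and the chain (a)$\Rightarrow$(b)$\Rightarrow$(c)$\Rightarrow$(a) otherwise match.
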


\begin{proof}
First note that $T_*$ is monotone.  Indeed, for
$(z_*^i,w_*^i)\in\gph T_*$, choose $z_{**}^i$ such that
$(w_*^i,0)\in T(z_*^i,z_{**}^i)$, $i=1,2$.  Monotonicity of $T$ then gives
\[
  \inner{z_*^1-z_*^2}{w_*^1-w_*^2}
  =
  \inner{(z_*^1,z_{**}^1)-(z_*^2,z_{**}^2)}
  {(w_*^1,0)-(w_*^2,0)}
  \ge0.
\]

Assume first that \emph{(a)} holds, and fix the pair $(c,B)$ supplied there.
By Theorem~\ref{thm:step-reduction}, its implementability is equivalent to
solvability of the active inclusion
\[
  0\in T_*(z_*)+c^{-1}B_*(z_*-x_*)
\]
for every $x_*\in H_*$.  After the invertible change of variables
$\xi=B_*^{1/2}z_*$, set
\[
  y=B_*^{1/2}x_*,
  \qquad
  \widehat T(\xi):=B_*^{-1/2}T_*(B_*^{-1/2}\xi).
\]
The active inclusion becomes the ordinary resolvent inclusion
\[
  0\in \widehat T(\xi)+c^{-1}(\xi-y).
\]
The mapping $\widehat T$ is monotone, and it is maximal monotone if and only
if $T_*$ is maximal monotone.  Minty's theorem says that the resolvent of
$\widehat T$ has full domain precisely in that case \cite{Minty1962}.
Therefore $T_*$ is maximal monotone, and \emph{(a)} implies \emph{(b)}.

Conversely, suppose \emph{(b)} holds and fix any $c>0$ and any aligned
$B$.  The same change of variables makes $\widehat T$ maximal monotone, so
Minty's theorem gives a unique active solution for every $x_*$.  Definition
\eqref{eq:Tstar-def} then supplies at least one compatible completion.
Therefore \emph{(c)} holds.  Finally, \emph{(c)} implies \emph{(a)}, for
example by taking $c=1$ and $B=P_*$.  This completes the proof.
\end{proof}

Theorem~\ref{thm:implementability-equivalence} reduces implementability to 
maximal monotonicity of the projected mapping $T_*$.  This property is not
automatically inherited from the original mapping. Indeed, $T$ may be maximal
monotone, with $\zer T\ne\emptyset$, while $T_*$ is not maximal monotone, as
the next example shows.

\begin{example}
\label{ex:maximal-not-implementable}
The following example is adapted from
\cite[Remark~2.2]{BrediesChencheneLorenzNaldi2022}.  On $\RR^2$, let
\[
  f(x,y):=\max\{e^y-x,0\},
  \qquad
  T:=\partial f,
  \qquad
  M:=\operatorname{diag}(1,0).
\]
Then $T$ is maximal monotone with $\zer T\ne\emptyset$, whereas the
projected mapping $T_*$ induced by $M$ is not maximal monotone.  Thus maximal
monotonicity of $T$, even
together with $\zer T\ne\emptyset$, does not guarantee implementability for
one---and hence for every---$c>0$ and aligned $B$.
\end{example}

\begin{proof}[{\bf Detail}]
The splitting induced by $M$ is
$H_*=\RR\times\{0\}$ and $H_{**}=\{0\}\times\RR$.  For compactness, suppress
the standard axis embeddings $s\mapsto(s,0)\in H_*$ and
$t\mapsto(0,t)\in H_{**}$ and write scalar coordinates for the arguments
and values of $T_*$ and $T_{**}$.
The subdifferential of $f$ is
\[
  \partial f(x,y)=
  \begin{cases}
    \{(0,0)\}, & e^y<x,\\
    \{(-1,e^y)\}, & e^y>x,\\
    \set{(-\lambda,\lambda e^y)\mid0\le\lambda\le1}, & e^y=x.
  \end{cases}
\]
It follows that
\[
  \zer T=\set{(x,y)\mid x>0,\ y\le\log x}\ne\emptyset.
\]
Requiring the second component to vanish gives
\[
  T_*(x)=
  \begin{cases}
    \{0\}, & x>0,\\
    \emptyset, & x\le0,
  \end{cases}
\]
and the assertions follow.
\end{proof}

For later use, we record several direct consequences of the projected and
completion constructions.  First,
\[
  \dom T_{**}=\gph T_*.
\]
Moreover,
\begin{equation}\label{eq:projected-zero-identities}
  \zer T_*=P_*(\zer T),
  \qquad
  T_{**}(z_*,0)
  =\set{z_{**}\in H_{**}\mid (z_*,z_{**})\in\zer T}.
\end{equation}
Consequently,
\[
  \zer T_*\ne\emptyset
  \quad\Longleftrightarrow\quad
  \zer T\ne\emptyset.
\]
Define also the partial inverse $\widetilde T$ with respect to
$H_*\oplus H_{**}$ by
\begin{equation}\label{eq:partial-inverse}
  (w_*,z_{**})\in\widetilde T(z_*,w_{**})
  \quad\Longleftrightarrow\quad
  (w_*,w_{**})\in T(z_*,z_{**}).
\end{equation}
Then, it holds that 
\begin{equation}
\label{eq:partial-inverse-projection-fiber}
  T_*(z_*)=P_*\widetilde T(z_*,0),
  \qquad
  T_{**}(z_*,w_*)
  =
  \set{z_{**}\in H_{**}\mid
  (w_*,z_{**})\in\widetilde T(z_*,0)}.
\end{equation}
Thus $T_*(z_*)$ is the projection of the partial-inverse slice
$\widetilde T(z_*,0)$ onto $H_*$, whereas $T_{**}(z_*,w_*)$ is its
fiber over $w_*$. 

\begin{proposition}\label{prop:Tstar-basic}
If $T$ is maximal monotone, then $T_*$ and $T_{**}$ are convex-valued, and
$T_{**}$ has closed graph.
\end{proposition}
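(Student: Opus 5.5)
Everything follows from two standard facts about a maximal monotone $T$ on a finite-dimensional space: each image set $T(z)$ is closed and convex, and $\gph T$ is closed. Both come from the characterization
\[
  w\in T(z)\iff \inner{z-z'}{w-w'}\ge0\ \ \forall (z',w')\in\gph T .
\]
For fixed $z$ the right side is an intersection of closed half-spaces in $w$, so $T(z)$ is closed and convex. Passing to limits in the same inequality shows that $\gph T$ is closed.

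Closed graph of $T_{**}$ comes straight from the closed graph of $T$. Take $(z_*^k,w_*^k,z_{**}^k)\to(z_*,w_*,z_{**})$ with $z_{**}^k\in T_{**}(z_*^k,w_*^k)$. Then $((z_*^k,z_{**}^k),(w_*^k,0))\in\gph T$, and this converges to $((z_*,z_{**}),(w_*,0))$. That limit lies in $\gph T$, so $z_{**}\in T_{**}(z_*,w_*)$.

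Convexity is the real content, because $T_*(z_*)$ and $T_{**}(z_*,w_*)$ vary $z_{**}$ rather than only the value $w$. Convexity of $T(z)$ alone is therefore not enough. I would use the partial inverse $\widetilde T$ from \eqref{eq:partial-inverse}.

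The first step is to show that $\widetilde T$ is maximal monotone. Monotonicity is the identity
\[
  \inner{(z_*,w_{**})-(z_*',w_{**}')}{(w_*,z_{**})-(w_*',z_{**}')}
  =\inner{z-z'}{w-w'},
\]
where $z=(z_*,z_{**})$, $w=(w_*,w_{**})$, and similarly for the primed points. Maximality follows because the map $(z,w)\mapsto((z_*,w_{**}),(w_*,z_{**}))$ is a bijection between the two graphs that preserves this pairing. Any monotone extension of $\widetilde T$ therefore pulls back to a monotone extension of $T$, so maximality transfers. This is Spingarn's observation \cite{Spingarn1983}.

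Consequently $\widetilde T(z_*,0)$ is a closed convex subset of $H_*\times H_{**}$. By \eqref{eq:partial-inverse-projection-fiber}:
\begin{itemize}
  \item $T_*(z_*)=P_*\widetilde T(z_*,0)$ is a linear image of a convex set, hence convex;
  \item $T_{**}(z_*,w_*)$ is the slice of $\widetilde T(z_*,0)$ over the fixed first component $w_*$, i.e.\ its intersection with an affine subspace, hence convex (and also closed, consistent with the closed graph).
\end{itemize}

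The only step with any substance is the maximality of $\widetilde T$, and it is just the graph-isometry argument above. I note that $T_*(z_*)$ need not be closed, since a projection of a closed convex set can fail to be closed. This is why the statement claims only convexity for $T_*$.
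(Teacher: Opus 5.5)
Your proof is correct and follows the paper's route. Maximal monotonicity of the partial inverse $\widetilde T$ gives the closed convex set $\widetilde T(z_*,0)$, and \eqref{eq:partial-inverse-projection-fiber} then yields convexity of $T_*(z_*)$ as a linear image and of $T_{**}(z_*,w_*)$ as a slice, while closedness of $\gph T_{**}$ comes from closedness of $\gph T$. The only difference is that you prove the standard facts directly (Spingarn's maximality of $\widetilde T$ via the pairing-preserving graph bijection, plus closed convex values and closed graph of maximal monotone mappings), where the paper cites them.
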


\begin{proof}
Since $T$ is maximal monotone, the partial inverse $\widetilde T$ is also maximal monotone
\cite[Proposition~2.1]{Spingarn1983}. Hence, it has closed convex values
\cite[Exercise~12.8(c)]{RockafellarWets1998}.
Relation~\eqref{eq:partial-inverse-projection-fiber} then gives convexity of
$T_*(z_*)$ and $T_{**}(z_*,w_*)$ \cite[Theorem 3.4]{Rockafellar1970ConvexAnalysis}.

Finally, suppose
$(z_*^\nu,w_*^\nu,z_{**}^\nu)\to(z_*,w_*,z_{**})$ and
$z_{**}^\nu\in T_{**}(z_*^\nu,w_*^\nu)$.  Then
$(w_*^\nu,0)\in T(z_*^\nu,z_{**}^\nu)$.  The graph of a maximal monotone
mapping is closed \cite[Exercise~12.8(b)]{RockafellarWets1998}, so $(w_*,0)\in T(z_*,z_{**})$, or equivalently
$z_{**}\in T_{**}(z_*,w_*)$.
\end{proof}

\section{Projected maximality and completion stability}\label{sec:conditions}

By Theorem~\ref{thm:implementability-equivalence}, implementability of the
semidefinite proximal step is equivalent to maximal monotonicity of $T_*$.  As
Example~\ref{ex:maximal-not-implementable} shows, this property cannot be
inferred from maximal monotonicity of $T$, even when $\zer T\ne\emptyset$.
In this section, we therefore develop verifiable conditions for the maximality
of $T_*$.  We begin with the classical relative-interior criterion and then
consider the piecewise-polyhedral case, where nonempty feasibility suffices.
Neither condition controls the kernel completions.  We therefore treat
boundedness of one completion fiber separately; that condition yields both
maximality of $T_*$ and local boundedness of $T_{**}$.

\subsection{Relative-interior and piecewise polyhedral conditions}

Let $J:H_*\hookrightarrow\HH$ denote the inclusion, so that $J^*=P_*$.  We
know directly from the definition of $T_*$,
\[
  z_*\in T_*^{-1}(w_*)
  \quad\Longleftrightarrow\quad
  \exists z_{**}\in H_{**}\ \text{such that}\
  (z_*,z_{**})\in T^{-1}(Jw_*).
\]
Consequently,
\[
  T_*^{-1}=J^*T^{-1}J.
\]
Since $\dom T^{-1}=\rge T$, the maximal-monotonicity preservation rule
\cite[Theorem~12.43]{RockafellarWets1998}, applied to the above identity, gives
the following classical baseline.

\begin{proposition}[Relative-interior sufficient condition]
\label{prop:ri-condition}
Let $T:\HH\To\HH$ be maximal monotone.  If
\begin{equation}\label{eq:ri-condition}
  H_*\cap\ri(\rge T)\ne\emptyset,
\end{equation}
then $T_*$ is maximal monotone. 
\end{proposition}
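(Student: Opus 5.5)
The plan is to work with the inverse mapping. We have already recorded the identity $T_*^{-1}=J^*T^{-1}J$, where $J:H_*\hookrightarrow\HH$ is the inclusion and $J^*=P_*$. A mapping is maximal monotone exactly when its inverse is, so it suffices to show that $S:=J^*T^{-1}J$ is maximal monotone on $H_*$.

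The first step is to note that $T^{-1}$ is maximal monotone on $\HH$, since $T$ is. Its domain is $\dom T^{-1}=\rge T$.

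The second step is to invoke the composition rule \cite[Theorem~12.43]{RockafellarWets1998}. For a maximal monotone $F:\HH\To\HH$ and a linear map $A:X\to\HH$, the rule states that $A^*FA$ is maximal monotone on $X$ provided
\[
\rge A\cap\ri(\dom F)\ne\emptyset.
\]
We take $X=H_*$, $A=J$ and $F=T^{-1}$. Then $\rge J=H_*$ and $\ri(\dom F)=\ri(\rge T)$, so the qualification condition is exactly \eqref{eq:ri-condition}. One caveat concerns $\ri(\rge T)$ itself: $\rge T$ need not be convex, but its closure is convex by \cite[Theorem~12.41]{RockafellarWets1998}, and relative interiors here are understood as in that theorem. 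The conclusion is that $S$ is maximal monotone, and hence so is $T_*=S^{-1}$.

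The only point needing care, and the main obstacle, is verifying the identity $T_*^{-1}=J^*T^{-1}J$ at the level of graphs. I would check it directly:
\[
z_*\in J^*T^{-1}(Jw_*)
\iff
\exists\,z\in T^{-1}(w_*,0)\ \text{with}\ P_*z=z_*.
\]
This is the same as requiring some $z_{**}$ with $(w_*,0)\in T(z_*,z_{**})$, which is the definition \eqref{eq:Tstar-def}. With the identity in hand, the proposition follows immediately from the composition rule.
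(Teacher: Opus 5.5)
Your proposal is correct and follows the paper's argument exactly. The paper also derives $T_*^{-1}=J^*T^{-1}J$ directly from the definition of $T_*$, notes $\dom T^{-1}=\rge T$, and applies \cite[Theorem~12.43]{RockafellarWets1998} so that the qualification condition becomes $H_*\cap\ri(\rge T)\ne\emptyset$. Your graph-level check of the identity and your remark on reading $\ri(\rge T)$ via near-convexity are helpful additions, but they do not change the route.
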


Condition \eqref{eq:ri-condition} is stronger than nonempty intersection:
$\zer T\ne\emptyset$ gives
$0\in H_*\cap\rge T$, but not necessarily
$0\in H_*\cap\ri(\rge T)$.  For piecewise-polyhedral mappings,
the relative-interior condition can be replaced by nonempty intersection
\cite{LiRockafellarSun2026Polyhedral}.  We quote the form needed here.

\begin{theorem}[{\cite[Theorem~3.3]{LiRockafellarSun2026Polyhedral}}]\label{thm:polyhedral-projection}
Let $T:\HH\To\HH$ be maximal monotone and piecewise polyhedral.  Suppose that 
\begin{equation}\label{eq:polyhedral-slice}
  H_*\cap\rge T\ne\emptyset.
\end{equation}
Then $T_*:H_*\To H_*$ is maximal monotone and piecewise polyhedral.
\end{theorem}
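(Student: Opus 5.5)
Since $T_*^{-1}=J^*T^{-1}J$, it suffices to show that $S:=J^*T^{-1}J$ is maximal monotone and piecewise polyhedral on $H_*$. Maximal monotonicity and piecewise polyhedrality are both invariant under inversion, so they pass to $T_*$. Here $T^{-1}$ is maximal monotone and piecewise polyhedral, because its graph is the same finite union of polyhedra with coordinates swapped. Hypothesis \eqref{eq:polyhedral-slice} says precisely that $\rge J\cap\dom T^{-1}\ne\emptyset$, which is the nonempty-intersection version of the qualification appearing in Proposition~\ref{prop:ri-condition}. So the plan is a polyhedral analogue of \cite[Theorem~12.43]{RockafellarWets1998}, in which the relative-interior requirement is dropped thanks to polyhedrality.

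\emph{Step 1 (piecewise polyhedrality).} I will write $\gph T^{-1}=\bigcup_{i=1}^m G_i$ with each $G_i$ polyhedral. Then
\[
\gph S=\bigcup_{i=1}^m \set{(w_*,P_*z)\mid (Jw_*,z)\in G_i}.
\]
Each piece is a linear image of the polyhedron $G_i\cap(H_*\times\HH)$. Linear images of polyhedra are polyhedra (Minkowski--Weyl), so $\gph S$ is a finite union of polyhedra. Monotonicity of $S$ was already shown in the proof of Theorem~\ref{thm:implementability-equivalence}.

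\emph{Step 2 (maximality), the main obstacle.} By Minty's theorem it suffices to show $\rge(I+S)=H_*$. I would first prove that $\rge(I+S)$ is closed. Since $\gph S$ is a finite union of polyhedra, $\rge(I+S)$ is the union of the images of these polyhedra under $(w,v)\mapsto w+v$. Each such image is a polyhedron, so the union is closed. It is therefore enough to show that $\rge(I+S)$ is dense, or equivalently has the right closure.

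For density I would approximate. For $\eps>0$, the mapping $S_\eps:=J^*(T^{-1}+\eps I)J$ is maximal monotone and in fact strongly monotone with full domain, since $T^{-1}+\eps I$ has resolvent-type properties. More directly, I would use the standard sum/composition rule under a relative-interior condition applied to a perturbed problem. For fixed $y\in H_*$, I would solve
\[
y\in w+J^*(T^{-1}+\eps I)(Jw),
\]
which is solvable because $J^*(T^{-1}+\eps I)J=J^*(T+\eps^{-1}\cdots)$. Concretely, I would work with the Yosida-type regularization $T^{-1}_\eps:=(T^{-1})_\eps$, which is single-valued, Lipschitz, and maximal monotone with full domain, so that $J^*T^{-1}_\eps J$ is maximal monotone. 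The solutions $w_\eps$ must then be shown bounded as $\eps\downarrow0$. This is where nonemptiness of $H_*\cap\rge T$ enters: I would pick a point $\bar w$ with $J\bar w\in\dom T^{-1}$ and a corresponding $\bar z\in T^{-1}(J\bar w)$, and use monotonicity against $(J\bar w,\bar z)$ to get a bound on $\norm{w_\eps}$.

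Passing to the limit along a subsequence then uses closedness of the graph, which must be argued for the polyhedral pieces. This limit passage is the delicate point, since the regularized $z_\eps$ need not stay bounded; polyhedrality must rescue it. The tool I would try first is Hoffman's lemma on each piece $G_i$, which lets me replace an unbounded $z_\eps$ by a bounded one with the same projection $P_*z_\eps$ up to error $O(\eps)$. If that failed, I would fall back on showing that $\rge(I+S)$ is a closed set containing the nonempty relative interior of a convex set equal to $H_*$.
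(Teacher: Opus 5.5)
The paper gives no proof of this statement. It is quoted from \cite[Theorem~3.3]{LiRockafellarSun2026Polyhedral}, so your proposal is an independent, self-contained route rather than a reconstruction. Its architecture is sound. Step~1 is correct as written, and the Yosida line of Step~2 does prove maximality: regularize $T^{-1}$, obtain a priori bounds from a point $J\bar w\in H_*\cap\rge T$, and pass to the limit using polyhedrality. Compared with the bare citation, your route shows exactly where polyhedrality replaces the relative-interior hypothesis of Proposition~\ref{prop:ri-condition}. In the maximality argument, every step except the final limit passage works for an arbitrary maximal monotone $T$. That passage needs only closedness of linear images of the pieces of $\gph T^{-1}$. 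This is precisely what breaks in Example~\ref{ex:maximal-not-implementable}, where $\gph T_*=(0,\infty)\times\{0\}$ misses its boundary point $(0,0)$.

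As written, however, Step~2 needs three repairs.

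First, discard $J^*(T^{-1}+\eps I)J$. Adding $\eps I$ to $T^{-1}$ does not enlarge its domain, so this mapping is not full-domain. The correct object is $(T^{-1})_\eps=(T+\eps I)^{-1}$, which is single-valued, $\eps^{-1}$-Lipschitz and everywhere defined. Hence $w\mapsto w+P_*(T+\eps I)^{-1}(Jw)$ is a continuous, strongly monotone bijection of $H_*$. With $z_\eps:=(T+\eps I)^{-1}(Jw_\eps)$ and $w_\eps+P_*z_\eps=y$, this reads $Jy\in T(z_\eps)+\bigl((1+\eps)P_*+\eps P_{**}\bigr)z_\eps$. That is a positive definite proximal step for $T$ that regularizes the kernel directions by $\eps$.

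Second, the regularized relation is $Jw_\eps-\eps z_\eps\in T(z_\eps)$, and its left side leaves $H_*$. So you must control $\eps z_\eps$ as well as $w_\eps$. Monotonicity against $(J\bar w,\bar z)$, together with $P_*z_\eps=y-w_\eps$, gives
\[
  \norm{w_\eps-\bar w}^2+\frac{\eps}{2}\norm{z_\eps}^2
  \le\norm{w_\eps-\bar w}\,\norm{y-\bar w-P_*\bar z}+\frac{\eps}{2}\norm{\bar z}^2 .
\]
Hence $w_\eps$ and $\sqrt{\eps}\,\norm{z_\eps}$ stay bounded, and $\eps z_\eps\to0$. The rate is only $O(\sqrt{\eps})$ in general, not the $O(\eps)$ you wrote, but that is all that is needed.

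Third, neither Hoffman's lemma nor the closed-plus-dense framing is required. In fact the regularized points do not lie in $\rge(I+S)$, so density is never actually established by your approximation. Instead, pass to a subsequence with $w_\eps\to w$ and all $(Jw_\eps-\eps z_\eps,z_\eps)$ in a single piece $G_i$. The polyhedron $\set{(a,P_*z)\mid (a,z)\in G_i}$ is closed and contains $(Jw_\eps-\eps z_\eps,\,y-w_\eps)$, which converges to $(Jw,\,y-w)$. This produces $z$ with $(Jw,z)\in G_i$ and $P_*z=y-w$, i.e., $y\in(I+S)(w)$, and your fallback becomes unnecessary.
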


Since $\zer T\ne\emptyset$ implies \eqref{eq:polyhedral-slice}, the above theorem, together with Theorem~\ref{thm:implementability-equivalence}, gives the following immediate consequence.
\begin{corollary}\label{cor:polyhedral-implementability}
Let $T$ be maximal monotone and piecewise polyhedral.  If $\zer T\ne\emptyset$, then for every fixed splitting $\HH=H_*\oplus H_{**}$, $T_*$ is maximal monotone, and hence the aligned semidefinite proximal scheme for $T$ is implementable relative to the splitting.
\end{corollary}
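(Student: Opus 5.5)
The plan is to chain three earlier results: show that the hypothesis \eqref{eq:polyhedral-slice} of Theorem~\ref{thm:polyhedral-projection} follows from $\zer T\ne\emptyset$, use that theorem to get maximality of $T_*$, and then pass to implementability through Theorem~\ref{thm:implementability-equivalence}.

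First, fix an arbitrary orthogonal splitting $\HH=H_*\oplus H_{**}$ and let $T_*$ be defined by \eqref{eq:Tstar-def}. Choose $\bar z\in\zer T$. Then $0\in T(\bar z)$, so $0\in\rge T$. Since $H_*$ is a linear subspace, it also contains $0$. Hence $0\in H_*\cap\rge T$, which is exactly \eqref{eq:polyhedral-slice}.

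Next, $T$ is maximal monotone and piecewise polyhedral by assumption, so all hypotheses of Theorem~\ref{thm:polyhedral-projection} hold. That theorem gives that $T_*:H_*\To H_*$ is maximal monotone (and piecewise polyhedral, which we do not need here).

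Finally, apply Theorem~\ref{thm:implementability-equivalence}. Its statement only asks that $T$ be monotone, which is implied by maximal monotonicity. The implication (b)$\Rightarrow$(c) then shows that the $(c,B)$-step is implementable for every $c>0$ and every $B$ aligned with the splitting. This is precisely implementability of the aligned semidefinite proximal scheme relative to that splitting.

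Since the splitting was arbitrary and nothing in the argument depended on it, the conclusion holds for every splitting. There is no real obstacle: the only substantive content is the cited projection result, Theorem~\ref{thm:polyhedral-projection}. The one point to state carefully is that the zero vector lies in every subspace $H_*$, so $\zer T\ne\emptyset$ gives the nonempty intersection uniformly in the choice of splitting.
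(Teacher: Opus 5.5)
Your proof is correct and follows exactly the paper's route: $\zer T\ne\emptyset$ gives $0\in H_*\cap\rge T$, so Theorem~\ref{thm:polyhedral-projection} yields maximal monotonicity of $T_*$. Theorem~\ref{thm:implementability-equivalence}, (b)$\Rightarrow$(c), then turns this into implementability for every $c>0$ and every aligned $B$.
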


The above corollary shows that, for piecewise polyhedral models, the implementability of the semidefinite proximal step is automatic whenever a solution exists, i.e., $\zer T\ne\emptyset$.

\subsection{One bounded fiber and completion stability}

Boundedness of a completion fiber is not required for implementability,
which depends only on maximal monotonicity of $T_*$.  It is instead a
sufficient condition that controls the otherwise unregularized completion
directions and, at the same time, provides a useful sufficient condition for
projected maximality.

\begin{assumption}[One bounded fiber]\label{ass:one-bounded-fiber}
There exists $(\bar z_*,\bar w_*)\in\gph T_*$ such that
$T_{**}(\bar z_*,\bar w_*)$ is bounded.
\end{assumption}

Note that this assumption holds automatically if $\zer T$ is nonempty and
bounded, by \eqref{eq:projected-zero-identities}.  We first isolate the
common recession geometry of the completion fibers.

\begin{lemma}
\label{lem:common-completion-recession}
Let $T:\HH\To\HH$ be maximal monotone and suppose 
$\gph T_* \ne\emptyset$. 
Define 
\begin{equation}\label{eq:Kstarstar}
	K_{**}:= \set{q_{**}\in H_{**}\mid \inner{(0,q_{**})}{y}\le0\quad\forall\,y\in D}, 
\end{equation}
where \(
D:=\cl(\dom\widetilde T)\) is nonempty closed convex with $\widetilde T$ being the partial inverse given in
\eqref{eq:partial-inverse}.
Then $K_{**}$ is a nonempty closed convex cone,
 and for all $(z_*,w_*)\in\gph T_*$, $T_{**}(z_*,w_*)$ is nonempty, closed and convex, and
  \begin{equation}
	\label{eq:common-completion-recession} \rec T_{**}(z_*,w_*) = \set{q_{**}\in H_{**}\mid
		(0,q_{**})\in N_D(z_*,0)} = K_{**}. 
\end{equation}
Consequently, one completion fiber is bounded if and only if every
completion fiber is bounded.  In that case every completion fiber is compact.
\end{lemma}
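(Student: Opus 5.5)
The plan is to transfer everything to the partial inverse $\widetilde T$. Since $T$ is maximal monotone, so is $\widetilde T$ \cite[Proposition~2.1]{Spingarn1983}. Consequently $\widetilde T$ has closed convex values, and $D=\cl(\dom\widetilde T)$ is convex by \cite[Theorem~12.41]{RockafellarWets1998}. Nonemptiness of $D$ comes from $\gph T_*\ne\emptyset$: by \eqref{eq:partial-inverse-projection-fiber}, any $(z_*,w_*)\in\gph T_*$ gives $(z_*,0)\in\dom\widetilde T$. The set $K_{**}$ is an intersection of closed half-spaces through the origin in $H_{**}$, so it is a closed convex cone containing $0$. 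Nonemptiness, closedness and convexity of the fibers follow from $\dom T_{**}=\gph T_*$ and Proposition~\ref{prop:Tstar-basic}.

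\textbf{Step 1: a horizon identity for $\widetilde T$.} Writing $S:=\widetilde T$ and fixing $x\in\dom S$, I will show
\[
  \rec S(x)=N_{\cl(\dom S)}(x).
\]
For the inclusion ``$\supseteq$'', let $v\in S(x)$, $q\in N_D(x)$ and $t\ge0$. For any $(x',v')\in\gph S$,
\[
  \inner{x-x'}{v+tq-v'}=\inner{x-x'}{v-v'}+t\inner{q}{x-x'} .
\]
The first term is nonnegative by monotonicity, and the second is nonnegative because $x'\in D$ and $q\in N_D(x)$. Hence $(x,v+tq)$ is monotonically related to all of $\gph S$, and maximality gives $v+tq\in S(x)$.

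For the inclusion ``$\subseteq$'', take $q\in\rec S(x)$. Monotonicity gives $\inner{x-x'}{v+tq-v'}\ge0$ for all $t\ge0$. Dividing by $t$ and letting $t\to\infty$ yields $\inner{q}{x'-x}\le0$ for every $x'\in\dom S$, and hence for every $x'\in D$ by continuity. This step is the heart of the argument. It is the only place where maximality is genuinely used, and I expect it to be the main obstacle only in getting the closure of the domain right.

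\textbf{Step 2: slicing.} Fix $(z_*,w_*)\in\gph T_*$ and set $C:=\widetilde T(z_*,0)$, a closed convex set. By \eqref{eq:partial-inverse-projection-fiber}, the fiber is the nonempty slice
\[
  T_{**}(z_*,w_*)=\set{z_{**}\mid(w_*,z_{**})\in C}.
\]
The recession cone of a nonempty intersection of closed convex sets is the intersection of their recession cones. Applying this to $C\cap(\{w_*\}\times H_{**})$ gives
\[
  \rec T_{**}(z_*,w_*)=\set{q_{**}\mid(0,q_{**})\in\rec C}.
\]
By Step 1 this equals $\set{q_{**}\mid(0,q_{**})\in N_D(z_*,0)}$, which is the first equality in \eqref{eq:common-completion-recession}.

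\textbf{Step 3: independence of the base point.} The condition $(0,q_{**})\in N_D(z_*,0)$ means $\inner{(0,q_{**})}{y-(z_*,0)}\le0$ for all $y\in D$. Since $\inner{(0,q_{**})}{(z_*,0)}=0$, this reduces to $\inner{(0,q_{**})}{y}\le0$ for all $y\in D$, which is exactly membership in $K_{**}$. This gives the second equality in \eqref{eq:common-completion-recession}, and the resulting cone is independent of $(z_*,w_*)$.

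\textbf{Step 4: boundedness.} A nonempty closed convex set is bounded if and only if its recession cone is $\{0\}$. Since all fibers share the recession cone $K_{**}$, one fiber is bounded exactly when $K_{**}=\{0\}$, that is, exactly when every fiber is bounded. Because the fibers are also closed, every fiber is then compact.
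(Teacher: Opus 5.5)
Your proposal is correct and follows the paper's proof essentially step for step. Like the paper, you pass to the maximal monotone partial inverse $\widetilde T$, identify $\rec T_{**}(z_*,w_*)$ with the $H_{**}$-slice of $\rec\widetilde T(z_*,0)$, equate that slice with the corresponding part of $N_D(z_*,0)$, and use $\inner{(0,q_{**})}{(z_*,0)}=0$ to reach $K_{**}$. The only difference is that you prove $\rec\widetilde T(x)=N_D(x)$ directly from (maximal) monotonicity instead of citing \cite[Theorems~12.37 and 3.6]{RockafellarWets1998}; your argument is valid and makes that step self-contained.
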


\begin{proof}
For any $(z_*, w_*) \in \gph T_* \ne \emptyset$, we know that $T_{**}(z_*, w_*)$ and $\widetilde T(z_*,0)$ are nonempty by \eqref{eq:Tstarstar-def} and \eqref{eq:partial-inverse}. By Proposition \ref{prop:Tstar-basic} and its proof, we know that the partial inverse
$\widetilde T$ is maximal monotone, and $T_{**}(z_*, w_*)$ and $\widetilde T(z_*,0)$ are
closed and convex. Moreover, $D$ is nonempty and convex since $\dom \widetilde{T}$ is nonempty and nearly convex \cite[Theorem~12.41]{RockafellarWets1998}. 
Hence, $K_{**}$ is a nonempty and closed convex cone by its definition.

Next, choose
$z_{**}\in T_{**}(z_*,w_*)$.  
For $q_{**} \in H_{**}$, the fiber identity \eqref{eq:partial-inverse-projection-fiber} implies that for all $t\ge 0$,
\[
z_{**} + t q_{**} \in T_{**}(z_*,w_*) \quad \Longleftrightarrow\quad (w_*,z_{**})+t(0,q_{**})\in\widetilde T(z_*,0).
\]
Hence, \( \rec T_{**}(z_*,w_*) =  \set{q_{**}\in H_{**}\mid
	(0,q_{**})\in \rec \widetilde T(z_*, 0)}.\)
Since $
N_D(z_*,0)
=\rec\widetilde T(z_*,0)
$ by \cite[Theorems~12.37 and 3.6]{RockafellarWets1998}, we have that 
\begin{equation}\label{eq:recTstar}
	\rec T_{**}(z_*,w_*) =  \set{q_{**}\in H_{**}\mid
	(0,q_{**})\in N_D(z_*,0)}.
\end{equation}
Note that for any $(z_*, w_*) \in \gph T_*$ and $q_{**} \in H_{**}$, $\inner{z_*}{q_{**}} = 0$, and
\[ \begin{aligned} (0, q_{**}) \in N_D(z_*,0) &\iff \inner{(0, q_{**})}{y-(z_*,0)}\le0 \qquad\forall\,y\in D\\ &\iff \inner{(0, q_{**})}{y}\le0 \qquad\forall\,y\in D\\ &\iff q_{**}\in K_{**}. \end{aligned} \]
This, together with \eqref{eq:recTstar}, proves \eqref{eq:common-completion-recession}.

Finally,
\cite[Theorem~8.4]{Rockafellar1970ConvexAnalysis} and
Proposition~\ref{prop:Tstar-basic} show that a completion fiber is bounded
exactly when its recession cone is $\{0\}$, and that every bounded completion
fiber is compact.
\end{proof}

The common-cone lemma now propagates Assumption~\ref{ass:one-bounded-fiber}
to the entire completion mapping.

\begin{theorem}\label{thm:one-bounded-fiber}
Let $T:\HH\To\HH$ be maximal monotone, and suppose
Assumption~\ref{ass:one-bounded-fiber} holds.  Then, 
\begin{enumerate}[label=(\roman*)]
  \item for every $(z_*,w_*)\in\gph T_*$, the completion fiber
  $T_{**}(z_*,w_*)$ is a nonempty compact convex subset of $H_{**}$;
  \item $T_{**}$ is locally bounded on its domain;
    \item $T_*$ is maximal monotone on $H_*$.
\end{enumerate}
\end{theorem}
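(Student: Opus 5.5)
Part (i) follows from Lemma~\ref{lem:common-completion-recession}. Assumption~\ref{ass:one-bounded-fiber} makes one fiber bounded, so $K_{**}=\{0\}$. Hence every fiber $T_{**}(z_*,w_*)$ with $(z_*,w_*)\in\gph T_*$ is nonempty, closed, convex and bounded, i.e.\ compact.

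For (ii), I argue by contradiction. Suppose local boundedness fails at some $(\bar z_*,\bar w_*)\in\dom T_{**}=\gph T_*$. Then there are $(z_*^\nu,w_*^\nu)\to(\bar z_*,\bar w_*)$ in $\gph T_*$ and $z_{**}^\nu\in T_{**}(z_*^\nu,w_*^\nu)$ with $\norm{z_{**}^\nu}\to\infty$. Passing to a subsequence, $q^\nu:=z_{**}^\nu/\norm{z_{**}^\nu}\to q_{**}$ with $\norm{q_{**}}=1$. The aim is to show $q_{**}\in K_{**}$, contradicting $K_{**}=\{0\}$. Since $(w_*^\nu,z_{**}^\nu)\in\widetilde T(z_*^\nu,0)$, the points $(z_*^\nu,0)$ lie in $\dom\widetilde T\subset D$. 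A direct route uses monotonicity of $\widetilde T$. Take any $(y,v)\in\gph\widetilde T$. Then
\[
\inner{(z_*^\nu,0)-y}{(w_*^\nu,z_{**}^\nu)-v}\ge0 .
\]
Dividing by $\norm{z_{**}^\nu}$ and letting $\nu\to\infty$, the terms involving $w_*^\nu$ and $v$ vanish because they stay bounded. This leaves $\inner{(\bar z_*,0)-y}{(0,q_{**})}\ge0$. Since $\inner{(\bar z_*,0)}{(0,q_{**})}=0$, this reads $\inner{(0,q_{**})}{y}\le0$ for all $y\in\dom\widetilde T$, and by continuity for all $y\in D$. So $q_{**}\in K_{**}$, which is the contradiction. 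This is essentially the standard horizon argument, and I do not expect trouble; it works at any point of $\dom T_{**}$.

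Part (iii) is the main obstacle. Monotonicity of $T_*$ was shown in the proof of Theorem~\ref{thm:implementability-equivalence}, so only maximality remains. I would establish it through Minty's theorem, which reduces it to solvability of $0\in T_*(z_*)+(z_*-x_*)$ for every $x_*\in H_*$. Equivalently, I need a solution of the full inclusion $0\in T(z)+P_*(z-x)$ for each $x$.

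To get it, I regularize the kernel directions. For $\eps>0$, take $z^\eps$ with $0\in T(z^\eps)+P_*(z^\eps-x)+\eps P_{**}z^\eps$, which exists by Minty since $P_*+\eps P_{**}\succ0$. Set $w^\eps:=-P_*(z^\eps-x)$, so that $(w^\eps,-\eps z_{**}^\eps)\in T(z^\eps)$. The task is to bound $z^\eps$ as $\eps\downarrow0$ and then pass to the limit using closedness of $\gph T$, which yields $(w_*,0)\in T(z)$ with $w_*=-(z_*-x_*)$.

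For the bound, I use monotonicity of $T$ against a point $(\bar w_*,0)\in T(\bar z_*,\bar z_{**})$ from the assumption. This gives
\[
\norm{z_*^\eps-\bar z_*}^2+\eps\norm{z_{**}^\eps}^2\lesssim \norm{z_*^\eps-\bar z_*}+\eps\norm{z_{**}^\eps}\norm{\bar z_{**}},
\]
which bounds $z_*^\eps$ and also bounds $\sqrt{\eps}\,z_{**}^\eps$, hence $\eps z_{**}^\eps\to0$.

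The delicate point is boundedness of $z_{**}^\eps$ itself. I would again normalize, $q^\eps:=z_{**}^\eps/\norm{z_{**}^\eps}\to q_{**}$, and use monotonicity of $\widetilde T$ as in (ii). The extra term $-\eps z_{**}^\eps$ now sits in the second argument of $\widetilde T$, namely $(w^\eps_*,z^\eps_{**})\in\widetilde T(z_*^\eps,-\eps z_{**}^\eps)$. After dividing by $\norm{z_{**}^\eps}$, the cross term $\inner{-\eps z^\eps_{**}}{\cdot}$ has a favorable sign or vanishes, since $\eps\norm{z_{**}^\eps}\to0$ and the base point $(z_*^\eps,-\eps z_{**}^\eps)$ tends to $(\bar z_*',0)$. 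This yields $q_{**}\in K_{**}=\{0\}$, a contradiction. With boundedness in hand, a subsequential limit solves the inclusion, so $T_*$ is maximal by Minty.
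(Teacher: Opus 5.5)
Your proof is correct, and part (i) is the paper's argument. In (ii) you derive the horizon inclusion directly from the monotonicity inequality for $\widetilde T$. The paper instead cites the graphical convergence $\lambda\widetilde T\to N_D$ as $\lambda\downarrow0$ \cite[Theorem~12.37]{RockafellarWets1998}, so this is the same idea made elementary. Do rename your base point there, since $(\bar z_*,\bar w_*)$ is already the point of Assumption~\ref{ass:one-bounded-fiber}.

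Part (iii) takes a genuinely different route. The paper proves the qualification $H_*\cap\ri D\ne\emptyset$ by proper separation: a separating normal $(0,q_{**})$ would be a nonzero element of $K_{**}$. It then applies the composition rule \cite[Theorem~12.43]{RockafellarWets1998} to $T_*=J^*\widetilde TJ$. You instead verify Minty's criterion $\rge(I+T_*)=H_*$ by letting $\eps\downarrow0$ in the positive definite steps $0\in T(z)+(P_*+\eps P_{**})z-P_*x$. Monotonicity against a point of the bounded fiber bounds $z_*^\eps$ and $\sqrt{\eps}\,z_{**}^\eps$. The normalization from (ii), applied to $(w_*^\eps,z_{**}^\eps)\in\widetilde T(z_*^\eps,-\eps z_{**}^\eps)$, then rules out $\norm{z_{**}^\eps}\to\infty$. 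Your ``favorable sign or vanishes'' remark should be made precise. After dividing by $\norm{z_{**}^\eps}$, the shifted base point contributes exactly $-\eps\norm{z_{**}^\eps}+\eps\inner{q^\eps}{P_{**}v}$, and this tends to $0$ because $\eps z_{**}^\eps\to0$.

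The paper's route is shorter given the cited machinery. It also shows that the one-bounded-fiber condition is in effect the relative-interior qualification of Proposition~\ref{prop:ri-condition}, transplanted from $T^{-1}$ to the partial inverse $\widetilde T$. Your route needs only $K_{**}=\{0\}$ from Lemma~\ref{lem:common-completion-recession}, plus Minty's theorem, monotonicity, and closedness of $\gph T$; it uses no separation, relative interiors, or composition rule. It is also constructive: it exhibits a solution of each semidefinite step as a cluster point of full positive definite proximal steps whose kernel regularization vanishes.
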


\begin{proof}
	
Let $(\bar z_*,\bar w_*)$ be the graph point in Assumption~\ref{ass:one-bounded-fiber}, and retain $D$ and $K_{**}$ from \eqref{eq:Kstarstar}. Since $T_{**}(\bar z_*, \bar w_*)$ is bounded, Lemma~\ref{lem:common-completion-recession} implies that $K_{**} = \{0\}$, and for each $(z_*,w_*)\in\gph T_*$, $T_{**}(z_*,w_*)$ is nonempty closed convex and bounded.

To prove local boundedness of $T_{**}$ on $\dom T_{**}$, suppose to the contrary that for some $(\widehat z_*,\widehat w_*)\in\dom T_{**}$, there are 
\( (z_*^\nu,w_*^\nu)\to(\widehat z_*,\widehat w_*)\) in $\dom T_{**}$, and \( z_{**}^\nu\in T_{**}(z_*^\nu,w_*^\nu)\)
with $\norm{z_{**}^\nu}\to\infty$. 
Set
$\lambda_\nu:=1/\norm{z_{**}^\nu}$ and pass to a subsequence such that
$\lambda_\nu z_{**}^\nu\to q_{**}$ with $\norm{q_{**}}=1$. Then $\lambda_\nu(w_*^\nu,z_{**}^\nu)\to(0,q_{**})$.
 Since 
 $\lambda_\nu (w_*^\nu,z_{**}^\nu)\in \lambda_\nu \widetilde T(z_*^\nu,0)$ and  $(z_*^\nu,0)\to(\widehat z_*,0)$, the graphical convergence
 $\lambda\widetilde T\xrightarrow{g}N_D$ as $\lambda\downarrow0$
 \cite[Theorem~12.37]{RockafellarWets1998} yields
 $(0,q_{**})\in N_D(\widehat z_*,0)$. By \eqref{eq:common-completion-recession}, $q_{**}\in K_{**}=\{0\}$, contradicting $\norm{q_{**}}=1$. Hence $T_{**}$ is locally bounded on its domain.

Lastly,  since
$(\bar z_*,0)\in H_*\cap\dom\widetilde T$,  $H_*\cap D\ne\emptyset$.  We
claim that
\begin{equation}\label{eq:partial-inverse-ri-cq}
  H_*\cap\ri D\ne\emptyset.
\end{equation}
Otherwise, by \cite[Theorem~2.39 and Exercise~2.45(e)]{RockafellarWets1998}, proper separation of the convex set $D$ from the subspace $H_*$ would give a nonzero $(0,q_{**})\in H_{**} = H_*^\perp$ such that 
\[ \inner{(0,q_{**})}{y-(\bar z_*,0)}\le0 \qquad\forall\,y\in D. \] 
Since $\inner{(0,q_{**})}{(\bar z_*,0)} = 0$, this is equivalent to 
\[ \inner{(0,q_{**})}{y}\le0 \qquad\forall\,y\in D, \] 
i.e., $q_{**}\in K_{**}=\{0\}$, contradicting $(0,q_{**})\ne0$. Thus \eqref{eq:partial-inverse-ri-cq} holds. Let $J:H_*\hookrightarrow\HH$ be the canonical inclusion. Then, from \eqref{eq:partial-inverse-projection-fiber}, \( T_*=J^*\widetilde TJ\). Since $\ri(\dom\widetilde T)=\ri D$ by \cite[Theorem~12.41]{RockafellarWets1998},  \eqref{eq:partial-inverse-ri-cq} implies that
\[
H_* \cap \ri(\dom \widetilde T) \ne \emptyset.
\]
Now, \cite[Theorem~12.43]{RockafellarWets1998} asserts that $T_*$ is maximal monotone on $H_*$.
\end{proof}

\subsection{Relation to Ha's range condition and full-space admissibility}

The preceding results distinguish full-domain solvability of the active step
from boundedness or uniqueness of its kernel completions.  We now use this
distinction to compare three fixed-preconditioner conditions: Ha's range
condition \cite{Ha1990}, the admissibility condition of Bredies et al. \cite{BrediesChencheneLorenzNaldi2022}, and
Assumption~\ref{ass:one-bounded-fiber}.  Because the first two are stated for
a given semidefinite preconditioner $M$, rather than for a prescribed
splitting, we first identify the active and kernel spaces induced by $M$ and
express the full resolvent in terms of $T_*$ and $T_{**}$.  This yields an
exact characterization of admissibility and a hierarchy among the three
conditions.

For each self-adjoint positive semidefinite operator $M:\HH\to\HH$
considered in this subsection, the symbols $H_*,H_{**}$ and the associated
mappings $T_*,T_{**}$ are understood relative to the splitting induced by
that $M$:
\begin{equation}\label{eq:M-induced-splitting}
  H_*:=\rge M=(\ker M)^\perp,
  \qquad H_{**}:=\ker M,
  \qquad \HH=H_*\oplus H_{**}.
\end{equation}
The restriction $M_*:=M|_{H_*}$ is positive definite, and, under the
canonical product representation of \eqref{eq:M-induced-splitting},
\(
  M(z_*,z_{**})=(M_*z_*,0).
\)
Thus $M$ is aligned with the splitting it induces. 

In \cite{Ha1990}, the ambient space is already presented as the Cartesian
product $\HH=H_1\times H_2$, where $H_1$ and $H_2$ are finite-dimensional
Hilbert spaces, and 
\(
  M(x,y):=(0,y) \) for $x\in H_1,\ y\in H_2$.
Thus
\[
  H_*=\rge M=\{0\}\times H_2,
  \qquad
  H_{**}=\ker M=H_1\times\{0\}.
\]
Here $(x,y)$ is written in Ha's native $H_1\times H_2$ order; under the
active--completion representation of this splitting, the same vector is
represented by $((0,y),(x,0))\in H_*\times H_{**}$.
Ha assumes in \cite{Ha1990} that
\begin{equation}\label{eq:Ha-range-condition}
  0\in\operatorname{int}(\rge T).
\end{equation}
Under this condition, it is shown in \cite[Propositions~1--2]{Ha1990} that $(M+T)^{-1}M$ has full domain and that its active
component is single-valued and nonexpansive. Note that \eqref{eq:Ha-range-condition} implies that 
$0\in\operatorname{int}(\dom T^{-1})$, so $T^{-1}$ is locally bounded at
$0$ \cite{Rockafellar1969LocalBoundedness}. Hence
$\zer T=T^{-1}(0)$ is nonempty and bounded, and Assumption~\ref{ass:one-bounded-fiber} holds. 
On the other hand, Bredies et al. \cite{BrediesChencheneLorenzNaldi2022} call $M$ admissible for an operator $T$
when $(M+T)^{-1}M$ is both everywhere defined and single-valued
\cite[Definition~2.1]{BrediesChencheneLorenzNaldi2022}, which, together with Theorems \ref{thm:step-reduction} and \ref{thm:implementability-equivalence}, means that
\begin{equation}\label{eq:admissibility-completion-characterization}
  \begin{split}
  M\text{ is admissible for } T
  \quad\Longleftrightarrow\quad{}
  &T_*\text{ is maximal monotone, and}\\
  &T_{**}(z_*,w_*)\text{ is a singleton for every }
    (z_*,w_*)\in\gph T_*.
  \end{split}
\end{equation}
Indeed, for any $(z_*,w_*)\in \gph T_*$, set $x_* = z_* + M_*^{-1} w_* \in H_*$. Then, every $z_{**}\in T_{**}(z_*,w_*)$ gives $(z_*, z_{**})\in (M+T)^{-1}M(x_*,0)$. Hence single-valuedness of $(M+T)^{-1}M$ forces every nonempty completion fiber to be a singleton.
Thus, if $M$ is admissible for $T$, then Assumption~\ref{ass:one-bounded-fiber} holds as well.
We summarize these observations in the following proposition.

\begin{proposition}
\label{prop:admissibility-hierarchy}
Let $T:\HH\To\HH$ be maximal monotone, let $M$ be self-adjoint and
positive semidefinite.  If either $0\in\operatorname{int}(\rge T)$ or $M$
is admissible for $T$, then Assumption~\ref{ass:one-bounded-fiber} holds for
the splitting induced by $M$.
\end{proposition}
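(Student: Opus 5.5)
The proof splits into the two hypotheses, and in each case I will exhibit one graph point $(\bar z_*,\bar w_*)\in\gph T_*$ whose completion fiber is bounded. Throughout, $H_*=\rge M$ and $H_{**}=\ker M$ as in \eqref{eq:M-induced-splitting}, and $T_*$, $T_{**}$ are taken relative to this splitting.

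\emph{Case $0\in\operatorname{int}(\rge T)$.} Since $\rge T=\dom T^{-1}$ and $T^{-1}$ is maximal monotone, the point $0$ lies in the interior of the domain of a maximal monotone mapping. By the local boundedness theorem \cite{Rockafellar1969LocalBoundedness}, $T^{-1}$ is then locally bounded at $0$, and in particular $T^{-1}(0)=\zer T$ is bounded. It is also nonempty, because $0\in\dom T^{-1}$. By \eqref{eq:projected-zero-identities}, $\zer T_*=P_*(\zer T)\ne\emptyset$, so I can pick $\bar z_*\in\zer T_*$ and set $\bar w_*=0$, which gives $(\bar z_*,0)\in\gph T_*$. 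Again by \eqref{eq:projected-zero-identities}, $T_{**}(\bar z_*,0)=\set{z_{**}\mid(\bar z_*,z_{**})\in\zer T}$. This set is isometric to a slice of the bounded set $\zer T$, so it is bounded and Assumption~\ref{ass:one-bounded-fiber} holds.

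\emph{Case $M$ admissible.} Admissibility means $(M+T)^{-1}M$ is everywhere defined and single-valued. Taking $c=1$ and $B=M$, which is aligned, the $(1,M)$-step is implementable. Theorem~\ref{thm:implementability-equivalence} then makes $T_*$ maximal monotone, so $\gph T_*\ne\emptyset$, and I pick any $(\bar z_*,\bar w_*)\in\gph T_*$. Now set $x_*:=\bar z_*+M_*^{-1}\bar w_*$, so that $\bar w_*=-M_*(\bar z_*-x_*)$. For any $z_{**}\in T_{**}(\bar z_*,\bar w_*)$, Theorem~\ref{thm:step-reduction} with $c=1$ and $B=M$ gives $(\bar z_*,z_{**})\in(M+T)^{-1}M(x_*,0)$. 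Since that set is a singleton, the fiber contains at most one point; it is nonempty by the definition of $T_*$. A singleton is bounded, so Assumption~\ref{ass:one-bounded-fiber} holds.

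The only delicate point is the first case, namely quoting the correct local-boundedness result for $T^{-1}$ and checking that boundedness of $\zer T$ carries over to the slice. The second case is just bookkeeping with Theorem~\ref{thm:step-reduction}.
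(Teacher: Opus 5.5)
Your proof is correct and follows essentially the same route as the paper. The paper also uses local boundedness of $T^{-1}$ at $0$ to get $\zer T$ nonempty and bounded, then reads off a bounded fiber via \eqref{eq:projected-zero-identities}. In the admissible case it likewise embeds each fiber $T_{**}(z_*,w_*)$ into $(M+T)^{-1}M(x_*,0)$ with $x_*=z_*+M_*^{-1}w_*$, which forces every fiber to be a singleton.
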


The next example shows that neither condition \eqref{eq:Ha-range-condition} nor condition \eqref{eq:admissibility-completion-characterization} is necessary for
Assumption~\ref{ass:one-bounded-fiber}. That is, these two conditions are strictly stronger than Assumption~\ref{ass:one-bounded-fiber}.

\begin{example}\label{ex:bounded-nonunique}
Let $\HH=\RR^2$, $H_*=\RR\times\{0\}$,
$H_{**}=\{0\}\times\RR$, and $M=\operatorname{diag}(1,0)$.  For
\[
  f(x,y):=\delta_{[-1,1]}(y),
  \qquad T:=\partial f,
\]
Assumption~\ref{ass:one-bounded-fiber} holds for the splitting induced by
$M$, although $M$ is not admissible for $T$ and condition
\eqref{eq:Ha-range-condition} fails.
\end{example}

\begin{proof}[{\bf Detail}]
Suppress the standard axis embeddings $s\mapsto(s,0)\in H_*$ and
$t\mapsto(0,t)\in H_{**}$ and write scalar coordinates for the arguments
and values of $T_*$ and $T_{**}$.  One has
\[
  T_*(x)=\{0\},
  \qquad
  T_{**}(x,0)=[-1,1].
\]
Thus Assumption~\ref{ass:one-bounded-fiber} holds.  For every
$(a,b)\in\RR^2$,
\[
  M(a,b)=(a,0),
  \qquad
  (M+T)^{-1}M(a,b)
  =\set{(a,y)\mid y\in[-1,1]}.
\]
Hence $(M+T)^{-1}M$ is not single-valued, so $M$ is not admissible for
$T$.  Moreover, $\rge T=\{0\}\times\RR$ has empty interior, so condition
\eqref{eq:Ha-range-condition} also fails.
\end{proof}

\section{Semidefinite PPA: convergence and applications}
\label{sec:convergence}
We now return to the semidefinite proximal iteration
\eqref{eq:intro-semidefinite-ppa} and study its convergence.  The proximal
operators $\{B_k\}$ are assumed to have a common range $H_*$ and kernel $H_{**}$.
The standing assumption below is stated for the original mapping $T$ and
separates solvability of the inclusion from implementability of the
semidefinite proximal steps.
\begin{assumption}\label{ass:Tstar-max}
The mapping $T:\HH\To\HH$ is maximal monotone, $\zer T\ne\emptyset$, and
the aligned semidefinite proximal scheme for $T$ is implementable relative
to the splitting $\HH=H_*\oplus H_{**}$.
\end{assumption}

By Theorem~\ref{thm:step-reduction}, each iteration \eqref{eq:intro-semidefinite-ppa} decomposes into a
variable-metric proximal step for $T_*$ on $H_*$ and a compatible completion
through $T_{**}$. We establish active convergence under a summable
residual-distance criterion and give conditions for full-space boundedness
and solution recovery.  We then identify the projected and completion
mappings for convex inf-projection and exact augmented Lagrangian steps.

\subsection{Active convergence and completion recovery}
\label{subsec:active-convergence}

The following standard assumption on the variable-metric sequence $\{B_k\}$ is needed for our analysis.
\begin{assumption}\label{ass:metric}
For each $k$, $c_k>0$ and $B_k:\HH\to\HH$ is self-adjoint and
positive semidefinite with common range $H_*$ and common kernel
$H_{**}$.  Denote the self-adjoint positive definite restriction of $B_k$
to $H_*$ by $B_{k,*}$.  There exist
$\alpha_k\ge1$ such that, for $k\ge1$,
\[
  \alpha_k^{-1}\norm{u}_{B_{k-1,*}}
  \le \norm{u}_{B_{k,*}}
  \le \alpha_k\norm{u}_{B_{k-1,*}}
  \qquad \forall\, u\in H_*,
\]
with $\prod_{k=1}^{\infty}\alpha_k<\infty$, and
\[
  c_k\longrightarrow c_\infty\in(0,+\infty].
\]
\end{assumption}

Under Assumption~\ref{ass:metric}, the sequence $\{B_{k,*}\}$ converges in
operator norm to a self-adjoint positive definite operator on $H_*$, denoted
by $B_{\infty,*}$; see \cite[(2.6)]{Rockafellar2023VariableMetric}. For later use, set
\[
  J_{k,*}:=\left(I+c_kB_{k,*}^{-1}T_*\right)^{-1}.
\]
Based on Theorem~\ref{thm:implementability-equivalence},
Assumption~\ref{ass:Tstar-max} ensures that $J_{k,*}$ is everywhere defined and
single-valued.  To allow inexact evaluation of $J_{k,*}$ at some given point, we define, for a computed iterate
$z^{k+1}=(z_*^{k+1},z_{**}^{k+1})$,
the active slice of the residual:
\[
  \mathcal R_{k,*}(z^{k+1})
  :=\set{r_*\in H_*\mid
  (r_*,0)\in
  T(z^{k+1})+c_k^{-1}B_k(z^{k+1}-z^k)},
\]
and set
\begin{equation}\label{eq:active-residual-distance}
  d_k:=\dist_{B_{k,*}^{-1}}
  \left(0,\mathcal R_{k,*}(z^{k+1})\right).
\end{equation}
Under Assumption \ref{ass:Tstar-max}, since $T$ is maximal monotone, each value $T(z^{k+1})$ is closed
and convex.  Hence $\mathcal R_{k,*}(z^{k+1})$ is closed and convex.
Whenever it is nonempty, the distance in
\eqref{eq:active-residual-distance} is attained.
Fix tolerances $\eps_k \in [0,1)$ with
$\sum_{k=0}^{\infty}\eps_k<\infty$.  The inexact semidefinite PPA accepts
$z^{k+1}$ when
\begin{equation}\label{eq:absolute-active-distance}
  c_kd_k\le \eps_k.
\end{equation}
An exact step is characterized by \(d_k=0\) and is enforced by taking \(\varepsilon_k=0\).

\begin{theorem}
\label{thm:semidefinite-convergence}
Let Assumptions~\ref{ass:Tstar-max} and \ref{ass:metric} hold, and let
$\{z^k\}\subset\HH$ satisfy \eqref{eq:absolute-active-distance}.  Then
\[
  z_*^k\to\bar z_*\in\zer T_*,
  \qquad
  z_*^{k+1}-z_*^k\to0.
\]

If, in addition, Assumption~\ref{ass:one-bounded-fiber} holds, then
$\{z^k\}$ is bounded and every cluster point belongs to $\zer T$.  If,
furthermore, $T_{**}(\bar z_*,0)=\{\bar z_{**}\}$, then
\[
  z_{**}^k\to\bar z_{**},
  \qquad
  z^k\to(\bar z_*,\bar z_{**}),
  \qquad
  0\in T(\bar z_*,\bar z_{**}).
\]
\end{theorem}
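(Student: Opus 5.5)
The plan is to reduce the active part to the positive definite variable-metric PPA for $T_*$ and invoke the known convergence theory of \cite{Rockafellar2023VariableMetric}. Then the completion part is handled by the local boundedness and closed-graph properties from Theorem~\ref{thm:one-bounded-fiber} and Proposition~\ref{prop:Tstar-basic}.

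\textbf{Active convergence.} By Assumption~\ref{ass:Tstar-max} and Theorem~\ref{thm:implementability-equivalence}, $T_*$ is maximal monotone. By \eqref{eq:projected-zero-identities}, $\zer T_*=P_*(\zer T)\ne\emptyset$.

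The first step is to translate the acceptance test into a residual for $T_*$. When $d_k<\infty$, the distance is attained at some $r_*^k\in\mathcal R_{k,*}(z^{k+1})$. Unwinding the definition,
\[
(r_*^k - c_k^{-1}B_{k,*}(z_*^{k+1}-z_*^k),\,0)\in T(z_*^{k+1},z_{**}^{k+1}),
\]
so $w_*^{k+1}:=r_*^k-c_k^{-1}B_{k,*}(z_*^{k+1}-z_*^k)\in T_*(z_*^{k+1})$. Equivalently,
\[
r_*^k\in T_*(z_*^{k+1})+c_k^{-1}B_{k,*}(z_*^{k+1}-z_*^k),\qquad c_k\norm{r_*^k}_{B_{k,*}^{-1}}\le\eps_k .
\]
This is exactly the inexact criterion of the variable-metric PPA for the maximal monotone $T_*$ on $H_*$, with summable errors. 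Nonexpansiveness of $J_{k,*}$ in the $B_{k,*}$-norm gives the distance to the exact step:
\[
\norm{z_*^{k+1}-J_{k,*}(z_*^k)}_{B_{k,*}}\le c_k\norm{r_*^k}_{B_{k,*}^{-1}}\le\eps_k.
\]
With Assumption~\ref{ass:metric}, the convergence theorem of \cite{Rockafellar2023VariableMetric} then yields $z_*^k\to\bar z_*\in\zer T_*$ and $z_*^{k+1}-z_*^k\to0$.

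If a self-contained argument is preferred, I would prove this directly. Take any $\hat z_*\in\zer T_*$ and use Fejér-type estimates
\[
\norm{z_*^{k+1}-\hat z_*}_{B_{k,*}}\le\norm{z_*^k-\hat z_*}_{B_{k,*}}+\eps_k,
\]
then convert to the $B_{k+1,*}$-norm at the cost of the factor $\alpha_{k+1}$, whose product is finite. This gives boundedness and, via the strict-decrease identity, $\sum\norm{z_*^{k+1}-J_{k,*}z_*^k}^2<\infty$ type control. The closed graph of $T_*$ together with $c_k\ge c>0$ then gives that cluster points are zeros, and Opial's argument with $B_{k,*}\to B_{\infty,*}$ gives uniqueness of the limit.

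\textbf{Full-space boundedness.} From the active step, $w_*^{k+1}\to0$, because $\norm{r_*^k}\to0$ and $c_k^{-1}B_{k,*}(z_*^{k+1}-z_*^k)\to0$, using that $c_k$ is bounded below and $B_{k,*}$ is bounded. Hence
\[
(z_*^{k+1},w_*^{k+1})\to(\bar z_*,0)\in\gph T_*,
\]
and $z_{**}^{k+1}\in T_{**}(z_*^{k+1},w_*^{k+1})$ by construction. Under Assumption~\ref{ass:one-bounded-fiber}, Theorem~\ref{thm:one-bounded-fiber}(ii) gives local boundedness of $T_{**}$ at $(\bar z_*,0)$, so $\{z_{**}^k\}$ is eventually bounded and hence bounded.

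For any cluster point $\bar z_{**}$, closedness of $\gph T_{**}$ (Proposition~\ref{prop:Tstar-basic}) gives $\bar z_{**}\in T_{**}(\bar z_*,0)$. By \eqref{eq:projected-zero-identities}, this means $(\bar z_*,\bar z_{**})\in\zer T$. If $T_{**}(\bar z_*,0)$ is a singleton, every cluster point of the bounded sequence $\{z_{**}^k\}$ equals $\bar z_{**}$, so the whole sequence converges.

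\textbf{Main obstacle.} The delicate step is the active convergence under the $B_{k,*}^{-1}$-distance error model. The work is in matching it to the hypotheses of \cite{Rockafellar2023VariableMetric}, handling the variable metric and the case $c_\infty=+\infty$, where $w_*^{k+1}\to0$ still holds since $c_k^{-1}\to0$. I would first check that the cited result accepts exactly the criterion $\norm{z_*^{k+1}-J_{k,*}z_*^k}_{B_{k,*}}\le\eps_k$; if it does not, I would fall back to the direct Fejér argument above.
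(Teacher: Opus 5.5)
Your proposal is correct and follows essentially the same route as the paper. You rewrite the accepted residual as an inexact resolvent step for $T_*$ satisfying $\norm{z_*^{k+1}-J_{k,*}(z_*^k)}_{B_{k,*}}\le\eps_k$ via $B_{k,*}$-nonexpansivity and invoke the variable-metric PPA theory; you then use $w_*^{k+1}\to0$, local boundedness of $T_{**}$ at $(\bar z_*,0)$ from Theorem~\ref{thm:one-bounded-fiber}, and the closed graph of $T_{**}$ (Proposition~\ref{prop:Tstar-basic}, equivalent to the paper's use of closedness of $\gph T$) to get boundedness, cluster points in $\zer T$, and full convergence under the singleton-fiber hypothesis.
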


\begin{proof}
Set $q_*^{k+1}:=c_k^{-1}B_{k,*}(z_*^{k+1}-z_*^k)$.
If $r_*\in\mathcal R_{k,*}(z^{k+1})$, then
$(r_*-q_*^{k+1},0)\in T(z^{k+1})$, and hence
$r_*-q_*^{k+1}\in T_*(z_*^{k+1})$.  Consequently,
\[
  \mathcal R_{k,*}(z^{k+1})
  \subseteq
  T_*(z_*^{k+1})+q_*^{k+1},
\]
and \eqref{eq:absolute-active-distance} implies
\[
  c_k\dist_{B_{k,*}^{-1}}
  \left(0,T_*(z_*^{k+1})+q_*^{k+1}\right) \le c_k d_k
  \le \eps_k.
\]
This, together with the $B_{k,*}$-nonexpansivity of $J_{k,*}$ \cite{LiST2020LinearProgramming}, implies that
\[
  \norm{z_*^{k+1}-J_{k,*}(z_*^k)}_{B_{k,*}}
  \le
  c_k\dist_{B_{k,*}^{-1}}
  \left(0,T_*(z_*^{k+1})+q_*^{k+1}\right)
  \le\eps_k.
\]
Then, \cite[Theorem~2.3]{LiST2020LinearProgramming} and
\cite[Theorem~2.1]{Rockafellar2023VariableMetric} ensure that 
$z_*^k\to\bar z_*\in\zer T_*$ and
$z_*^{k+1}-z_*^k\to0$.

Since \eqref{eq:absolute-active-distance} makes $d_k$ finite, for each $k$, there exists $r_*^{k+1}\in\mathcal R_{k,*}(z^{k+1})$ such that
\(
  \norm{r_*^{k+1}}_{B_{k,*}^{-1}}
  = d_k.
\)
Set
\(
  \widehat w_*^{k+1}:=r_*^{k+1}-q_*^{k+1}.
\)
Then
\begin{equation}\label{eq:hatwkp1}
  (\widehat w_*^{k+1},0)\in T(z^{k+1}),
  \qquad
  z_{**}^{k+1}\in
  T_{**}(z_*^{k+1},\widehat w_*^{k+1}).
\end{equation}
Moreover, with $c_{\min}:=\inf_kc_k>0$,
\begin{equation}\label{eq:hatwkp10}
  \norm{\widehat w_*^{k+1}}_{B_{k,*}^{-1}}
  \le
  d_k 
  +c_k^{-1}\norm{z_*^{k+1}-z_*^k}_{B_{k,*}}
  \longrightarrow0,
\end{equation}
where $d_k\le\eps_k/c_{\min}\to0$.  Hence
$\widehat w_*^{k+1}\to0$.

Now suppose that Assumption~\ref{ass:one-bounded-fiber} holds.  By
Theorem~\ref{thm:one-bounded-fiber}, $T_{**}$ is locally bounded at
$(\bar z_*,0)$.  Therefore, $\{z_{**}^{k+1}\}$ is eventually bounded.
Together with the convergence of $\{z_*^{k+1}\}$,
this makes $\{z^{k}\}$ bounded. The assertion on the cluster point follows from the closedness of $\gph T$, \eqref{eq:hatwkp1}, and \eqref{eq:hatwkp10}. The singleton assumption on the limiting fiber $T_{**}(\bar z_*,0)$ then ensures that the full sequence converges to $(\bar z_*,\bar z_{**})$.
\end{proof}

\begin{remark}
\label{rem:active-convergence-interpretation}
The convergence of the active sequence $z_*^k$ can take different forms.
For example, suppose that the metrics $B_k$ are supplied in the structured 
mode of
\[
  B_k=\mathcal U G_k\mathcal U^*,
  \qquad \rge\mathcal U=H_*,
  \qquad G_k\succ0,
\]
with a fixed linear mapping $\mathcal U:E\to\HH$ from a finite-dimensional
Hilbert space $E$.  Then
$\ker\mathcal U^*=H_{**}$ and
$\mathcal U^*z^k=\mathcal U^*z_*^k$.  Since
$\mathcal U^*|_{H_*}$ is injective,
\[
  z_*^k\longrightarrow\bar z_*
  \quad\Longleftrightarrow\quad
  \mathcal U^*z^k\longrightarrow\mathcal U^*\bar z_*.
\]
Thus factor coordinates can identify the convergent component without
forming the projection $P_*$. 
\end{remark}

We also have the following standard variable-metric
rate result.

\begin{corollary}\label{cor:active-linear-rate}
In the setting of Theorem~\ref{thm:semidefinite-convergence}, suppose that the
same iterates satisfy, for some $\eps_k\in(0,1)$ with
$\sum_k\eps_k<\infty$, the tightened stopping rule
\begin{equation}\label{eq:relative-active-distance}
  c_kd_k
  \le \eps_k\min\set{1,
  \norm{z_*^{k+1}-z_*^k}_{B_{k,*}}}.
\end{equation}
Assume that $T_*$ is metrically subregular at the active limit $\bar z_*$
for $0$ in the $B_{\infty,*}$-metric.
Then $\dist_{B_{\infty,*}}(z_*^k,\zer T_*)$ converges Q-linearly at the rate
\begin{equation}\label{eq:generic-active-linear-rate}
  r=\frac{a_\infty}{\sqrt{a_\infty^2+c_\infty^2}},
\end{equation}
where \(
  a_\infty:=\operatorname{subreg}_{B_{\infty,*}}
  (T_*;\bar z_*\mid0)<\infty\) and $r=0$ if $c_\infty=+\infty$.
\end{corollary}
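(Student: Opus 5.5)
The plan is to reduce the statement to the known Q-linear rate result for the positive definite variable-metric PPA, applied to $T_*$ on $H_*$ \cite[Theorem~2.3]{LiST2020LinearProgramming}, \cite[Theorem~2.2]{Rockafellar2023VariableMetric}. The reduction has the same shape as the proof of Theorem~\ref{thm:semidefinite-convergence}. By Assumption~\ref{ass:Tstar-max} and Theorem~\ref{thm:implementability-equivalence}, $T_*$ is maximal monotone with $\zer T_*=P_*(\zer T)\ne\emptyset$. Hence $J_{k,*}$ is everywhere defined and single-valued, and the active sequence $\{z_*^k\}$ is a sequence generated by an inexact variable-metric PPA for $T_*$ with metrics $B_{k,*}$ obeying Assumption~\ref{ass:metric}. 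It remains to show that the tightened rule \eqref{eq:relative-active-distance} implies the standard relative criterion for that active PPA.

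\textbf{Step 1 (criterion transfer).} With $q_*^{k+1}:=c_k^{-1}B_{k,*}(z_*^{k+1}-z_*^k)$, the proof of Theorem~\ref{thm:semidefinite-convergence} gives $\mathcal R_{k,*}(z^{k+1})\subseteq T_*(z_*^{k+1})+q_*^{k+1}$. It also gives the estimate
\[
  \norm{z_*^{k+1}-J_{k,*}(z_*^k)}_{B_{k,*}}
  \le c_k\dist_{B_{k,*}^{-1}}\bigl(0,T_*(z_*^{k+1})+q_*^{k+1}\bigr)\le c_kd_k .
\]
Combining this with \eqref{eq:relative-active-distance} yields
\[
  \norm{z_*^{k+1}-J_{k,*}(z_*^k)}_{B_{k,*}}
  \le\eps_k\min\set{1,\norm{z_*^{k+1}-z_*^k}_{B_{k,*}}}.
\]
This is precisely the combined absolute-and-relative stopping rule (criteria (A) and (B)) of the variable-metric theory. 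In particular, the hypotheses of Theorem~\ref{thm:semidefinite-convergence} hold, so $z_*^k\to\bar z_*\in\zer T_*$.

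\textbf{Step 2 (rate).} Apply the variable-metric linear-rate theorem to $T_*$, using metric subregularity of $T_*$ at $\bar z_*$ for $0$ in the $B_{\infty,*}$-metric with modulus $a_\infty<\infty$. Since $c_k\to c_\infty$ and $B_{k,*}\to B_{\infty,*}$ in operator norm, that theorem gives
\[
  \dist_{B_{k+1,*}}(z_*^{k+1},\zer T_*)\le\theta_k\dist_{B_{k,*}}(z_*^k,\zer T_*)
\]
for all large $k$. Here $\theta_k\to a_\infty/\sqrt{a_\infty^2+c_\infty^2}$, or $\theta_k\to0$ when $c_\infty=\infty$. The factors $\alpha_k\to1$ and $\eps_k\to0$ are absorbed into $\theta_k$.

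\textbf{Step 3 (passing to the limiting metric).} Convert these distances to $\dist_{B_{\infty,*}}$. By Assumption~\ref{ass:metric}, the $B_{k,*}$- and $B_{\infty,*}$-norms are equivalent with constants $\prod_{j>k}\alpha_j\to1$. This conversion preserves the Q-linear statement with asymptotic rate $r$ as in \eqref{eq:generic-active-linear-rate}.

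\textbf{Main obstacle.} The main obstacle is Step 1: checking that the active residual distance $d_k$, which is defined through the full-space inclusion, really controls the active resolvent error in the exact form the cited theorem requires. This includes the case where $\mathcal R_{k,*}(z^{k+1})$ might be empty; that case is excluded because \eqref{eq:relative-active-distance} forces $d_k<\infty$. A secondary point is handling the conventions of \cite{Rockafellar2023VariableMetric} in Step 3, namely whether the rate is stated in $B_{k,*}$ or $B_{\infty,*}$, through the metric-equivalence argument just described.
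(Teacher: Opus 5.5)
Your proposal is correct and follows the paper's route. The inclusion $\mathcal R_{k,*}(z^{k+1})\subseteq T_*(z_*^{k+1})+q_*^{k+1}$ turns \eqref{eq:relative-active-distance} into the relative criterion of \cite[Theorem~2.2]{Rockafellar2023VariableMetric} for $T_*$ on $H_*$, and that theorem then gives the rate; your Step~3 metric conversion is essentially already contained in the cited result. The one hypothesis you do not check is the normalization $c_k\ge1$ required in that theorem. The paper handles it by passing to $T_*':=c_{\min}T_*$ and $c_k':=c_k/c_{\min}$, a rescaling that leaves the resolvents, the stopping rule, and the quotient $a_\infty/\sqrt{a_\infty^2+c_\infty^2}$ unchanged.
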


\begin{proof}
As in the proof of Theorem~\ref{thm:semidefinite-convergence},
\[
  \dist_{B_{k,*}^{-1}}
  \left(0,T_*(z_*^{k+1})+q_*^{k+1}\right)
  \le d_k.
\]
Thus \eqref{eq:relative-active-distance} is the relative stopping criterion
in \cite[Theorem~2.2]{Rockafellar2023VariableMetric} for $T_*$ on $H_*$. 
To match, if necessary, the normalization requirement \(c_k\ge 1\) in
\cite{Rockafellar2023VariableMetric}, set
\(
c_{\min}:=\inf_k c_k>0\),
\(T_*':=c_{\min}T_*\), and 
\(c_k':={c_k}/{c_{\min}}.
\)
Then $c_k'\ge 1$, and this scaling leaves the resolvents, the relative
stopping rule, and the rate quotient unchanged.
\end{proof}

\begin{remark}
\label{rem:inexact-residual-distance}
The stopping criteria  \eqref{eq:absolute-active-distance} and
\eqref{eq:relative-active-distance} do not require the algorithm to select a
particular member of \(\mathcal R_{k,*}(z^{k+1})\).  If an implementation or a subroutine
does produce an explicit residual
\(
  e_*^{k+1}\in\mathcal R_{k,*}(z^{k+1}),
\)
then
\[
  d_k\le\norm{e_*^{k+1}}_{B_{k,*}^{-1}}.
\]
Consequently, the computable bounds
\[
  c_k\norm{e_*^{k+1}}_{B_{k,*}^{-1}}\le\eps_k, \quad c_k\norm{e_*^{k+1}}_{B_{k,*}^{-1}}\le\eps_k \min\{1,\norm{z_*^{k+1}-z_*^k}_{B_{k,*}}\},
\]
are sufficient for \eqref{eq:absolute-active-distance} and \eqref{eq:relative-active-distance}, respectively.  
\end{remark}

\subsection{Convex minimization and augmented Lagrangian steps}
\label{subsec:convex-specializations}

We next identify the projected and completion mappings in two standard convex
settings.  For convex minimization, partial minimization produces the active
objective and recovers the omitted variables.  For the Lagrangian saddle
mapping, the same construction separates the multiplier update from its
primal realizations. 
In each case, the semidefinite metric determines which variables carry the
proximal dynamics and which are recovered through the completion.

\subsubsection{Convex minimization by inf-projection}

Let $f:\HH\to(-\infty,+\infty]$ be a proper closed convex function.  For the choice of $T = \partial f$, an exact semidefinite proximal step
\eqref{eq:intro-semidefinite-ppa} is equivalent to 
\begin{equation}\label{eq:semidefinite-ppa-minimization}
  \min_{z_*\in H_*,\,z_{**}\in H_{**}}
  \left\{
    f(z_*,z_{**})
    +\frac{1}{2c_k}\norm{z_*-z_*^k}_{B_{k,*}}^2
  \right\}.
\end{equation}
The quadratic term regularizes only $z_*$.  Eliminating the unregularized
variable $z_{**}$ therefore leads to the inf-projection
\begin{equation}\label{eq:finf-def}
  f_{\mathrm{inf}}(z_*):=\inf_{z_{**}\in H_{**}} f(z_*,z_{**}).
\end{equation}
Minimizing first over $z_{**}$ reduces the active problem to a proximal
minimization of $f_{\mathrm{inf}}$, while recovery of a full-space iterate
requires attainment of the infimum in \eqref{eq:finf-def}.  The next result shows how $T_*$ and $T_{**}$ are identified.

\begin{theorem}\label{thm:minimization-case}
Suppose there exists $\hat z_*\in\dom f_{\mathrm{inf}}$ such that
\[
  \argmin_{z_{**}\in H_{**}} f(\hat z_*,z_{**})
\]
is nonempty and bounded.  Then $f_{\mathrm{inf}}$ is closed, proper, and
convex, and for every $z_*\in\dom f_{\mathrm{inf}}$, the set
\[
  \argmin_{z_{**}\in H_{**}} f(z_*,z_{**})
\]
is nonempty and bounded.  Moreover,
\begin{equation}\label{eq:Tstar-subdiff}
  T_* = \partial f_{\mathrm{inf}},
\end{equation}
and
\begin{equation}\label{eq:Tstarstar-argmin}
  T_{**}(z_*,w_*)=
  \argmin_{z_{**}\in H_{**}} f(z_*,z_{**}) \qquad \forall \, (z_*,w_*)\in\gph T_*.
\end{equation}
\end{theorem}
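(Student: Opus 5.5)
The plan is to treat the inf-projection in three stages: closedness and properness of $f_{\mathrm{inf}}$ via a horizon/recession argument, then the subdifferential formula, then the identification of the completion fibers. Throughout, write $z=(z_*,z_{**})$ and $f^\infty$ for the horizon function of $f$.

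\textbf{Step 1: horizon condition and closedness.} Let $\bar z_{**}\in\argmin_{z_{**}} f(\hat z_*,z_{**})$. The set $\argmin_{z_{**}}f(\hat z_*,\cdot)$ is a nonempty level set of the closed convex function $f(\hat z_*,\cdot)$ on $H_{**}$. Its boundedness is therefore equivalent (Rockafellar, Theorem 8.7) to
\[
  f^\infty(0,q_{**})>0 \qquad \forall\, q_{**}\ne0 .
\]
Here I use that the horizon function of the slice $f(\hat z_*,\cdot)$ is $q_{**}\mapsto f^\infty(0,q_{**})$, which is independent of the base point in $\dom f$. This horizon condition is exactly the hypothesis of Rockafellar--Wets, Theorem~3.31 (equivalently Rockafellar, Theorem~9.2, applied to the linear map $P_*$). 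That theorem yields:
\begin{itemize}[label={}]
\item $f_{\mathrm{inf}}$ is proper, closed and convex;
\item the infimum in \eqref{eq:finf-def} is attained whenever it is finite;
\item $f$ is level-bounded in $z_{**}$ locally uniformly in $z_*$.
\end{itemize}
For every $z_*\in\dom f_{\mathrm{inf}}$ the argmin is a level set of $f(z_*,\cdot)$, whose horizon function is again $f^\infty(0,\cdot)$. Hence it is nonempty and bounded.

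\textbf{Step 2: the subdifferential formula \eqref{eq:Tstar-subdiff}.} I will show that for any $z_*$, $w_*\in H_*$, and any $z_{**}\in\argmin f(z_*,\cdot)$,
\[
  (w_*,0)\in\partial f(z_*,z_{**})\iff w_*\in\partial f_{\mathrm{inf}}(z_*).
\]
Assuming $(w_*,0)\in\partial f(z_*,z_{**})$, the subgradient inequality gives
\[
  f(z'_*,z'_{**})\ge f(z_*,z_{**})+\inner{w_*}{z'_*-z_*}.
\]
Taking the infimum over $z'_{**}$, and using that the case $z'_*=z_*$ forces $z_{**}$ to be a minimizer so that $f(z_*,z_{**})=f_{\mathrm{inf}}(z_*)$, yields $w_*\in\partial f_{\mathrm{inf}}(z_*)$. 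Conversely, if $w_*\in\partial f_{\mathrm{inf}}(z_*)$, pick a minimizer $z_{**}$, which exists by Step~1 since $z_*\in\dom\partial f_{\mathrm{inf}}\subset\dom f_{\mathrm{inf}}$. Then
\[
  f(z')\ge f_{\mathrm{inf}}(z'_*)\ge f_{\mathrm{inf}}(z_*)+\inner{w_*}{z'_*-z_*}=f(z)+\inner{(w_*,0)}{z'-z},
\]
so $(w_*,0)\in\partial f(z)$. By \eqref{eq:Tstar-def} this gives $T_*=\partial f_{\mathrm{inf}}$.

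\textbf{Step 3: the completion fibers \eqref{eq:Tstarstar-argmin}.} The forward direction in Step~2 shows that any $z_{**}$ with $(w_*,0)\in\partial f(z_*,z_{**})$ minimizes $f(z_*,\cdot)$. The converse direction shows that, when $w_*\in\partial f_{\mathrm{inf}}(z_*)$, every minimizer $z_{**}$ works. Together these give \eqref{eq:Tstarstar-argmin} for $(z_*,w_*)\in\gph T_*$.

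\textbf{Main obstacle.} The delicate point is Step~1: transferring boundedness of one argmin to the global horizon condition and invoking the inf-projection closedness theorem correctly. The subtlety is that the horizon function of the slice must coincide with the restriction of $f^\infty$, which requires the slice to be proper at $\hat z_*$. That properness holds because $\hat z_*\in\dom f_{\mathrm{inf}}$ and the argmin is nonempty. As a cross-check, Lemma~\ref{lem:common-completion-recession} gives the analogous propagation of boundedness for $T=\partial f$, but the direct convex-analysis route is cleaner.
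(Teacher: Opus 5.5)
Your proof is correct and follows essentially the same route as the paper. You convert boundedness of the one partial-minimizer set into the horizon condition $f^\infty(0,q_{**})>0$ for all $q_{**}\ne0$; the paper does this via Rockafellar--Wets, Proposition~3.23 and Theorem~3.21, which is equivalent to your use of Rockafellar's Theorem~8.7. You then deduce closedness, properness, convexity and attainment for $f_{\mathrm{inf}}$, and prove $w_*\in\partial f_{\mathrm{inf}}(z_*)\iff(w_*,0)\in\partial f(z_*,z_{**})$ for minimizers $z_{**}$, which gives both identities. One small citation point: Rockafellar--Wets Theorem~3.31 only yields level-boundedness in $z_{**}$ locally uniformly in $z_*$, and the paper gets closedness, properness and attainment from Corollary~3.32 with Theorem~1.17(a); your alternative citation of Rockafellar's Theorem~9.2 applied to $P_*$ gives those conclusions directly, so nothing is missing.
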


\begin{proof}
Set $g(z_{**}):=f(\hat z_*,z_{**})$ for $z_{**}\in H_{**}$.  Boundedness of its nonempty minimizer
set and \cite[Proposition~3.23 and Theorem~3.21]{RockafellarWets1998} give
\[
  f^\infty(0,d_{**})=g^\infty(d_{**})>0
  \qquad\forall\, d_{**}\ne0.
\]
It follows from \cite[Theorem~3.31]{RockafellarWets1998} that $f$ is
level-bounded in $z_{**}$ locally uniformly in $z_*$.  Hence
\cite[Corollary~3.32 and Theorem~1.17(a)]{RockafellarWets1998} show that
$f_{\mathrm{inf}}$ is closed, proper, and convex and that, for every
$z_*\in\dom f_{\mathrm{inf}}$, the function $f(z_*,\cdot)$ attains its
minimum on a nonempty compact set.

Fix $z_*\in\dom f_{\mathrm{inf}}$ and
$z_{**}\in\argmin_y f(z_*,y)$.  For $w_*\in H_*$, the subgradient
inequality gives the equivalence
\begin{align*}
  w_*\in\partial f_{\mathrm{inf}}(z_*)
  &\quad\Longleftrightarrow\quad
  f_{\mathrm{inf}}(x_*)\ge f_{\mathrm{inf}}(z_*)+\inner{w_*}{x_*-z_*}
  \quad \forall \, x_*\in H_*\\
  &\quad\Longleftrightarrow\quad
  f(x_*,x_{**})\ge f(z_*,z_{**})
  +\inner{w_*}{x_*-z_*}
  \quad \forall \, (x_*,x_{**})\in\HH\\
  &\quad\Longleftrightarrow\quad
  (w_*,0)\in\partial f(z_*,z_{**}).
\end{align*}
It gives
\eqref{eq:Tstar-subdiff}  on
$\dom f_{\mathrm{inf}}$ and \eqref{eq:Tstarstar-argmin}.  If $z_*\notin\dom f_{\mathrm{inf}}$ and
$w_*\in T_*(z_*)$, a realizing relation
$(w_*,0)\in\partial f(z_*,z_{**})$ would make $z_{**}$ a finite minimizer of
$f(z_*,\cdot)$, a contradiction.  Thus both sides of
\eqref{eq:Tstar-subdiff} are empty outside $\dom f_{\mathrm{inf}}$.
\end{proof}

Under the assumptions of Theorem~\ref{thm:minimization-case},
\eqref{eq:Tstar-subdiff} and
\cite[Corollary~31.5.2]{Rockafellar1970ConvexAnalysis} show that $T_*$ is
maximal monotone.  Hence every aligned step is implementable.  Since
$f_{\mathrm{inf}}$ is proper, closed, and convex,
$\partial f_{\mathrm{inf}}(z_*)$ is nonempty for every
$z_*\in\ri(\dom f_{\mathrm{inf}})$.  Choose such a point $z_*$ and any
$w_*\in\partial f_{\mathrm{inf}}(z_*)$.  Then
\eqref{eq:Tstarstar-argmin} and Theorem~\ref{thm:minimization-case} show that
$T_{**}(z_*,w_*)$ is nonempty and bounded, so
Assumption~\ref{ass:one-bounded-fiber} holds.  Therefore,
Theorem~\ref{thm:one-bounded-fiber} shows that every completion fiber is
compact and that $T_{**}$ is locally bounded.

Under Assumption~\ref{ass:metric}, let $\{z^k\}$ be generated by
\eqref{eq:semidefinite-ppa-minimization}.  If $\argmin f\ne\emptyset$, then
Theorem~\ref{thm:semidefinite-convergence} gives
$z_*^k\to\bar z_*\in\argmin f_{\mathrm{inf}}$, boundedness of the full
iterates, and optimality of every cluster point.  The full sequence converges
whenever the limiting partial-minimizer set
\(
  \argmin_{z_{**}\in H_{**}}f(\bar z_*,z_{**})
\)
is a singleton.

\subsubsection{Augmented Lagrangian steps and primal realizability}

It is classical that the augmented Lagrangian method (ALM) is a
proximal point method on the dual side \cite{Rockafellar1976ALM}.
The proximal method of multipliers also provides a complementary
primal--dual interpretation \cite{Rockafellar2024PMM}.  In particular,
an exact augmented Lagrangian step can be viewed as a semidefinite
proximal point step for the Lagrangian saddle mapping.
We make this interpretation precise and characterize the associated
implementability and primal realizability.

Let $X$ and $Y$ be finite-dimensional Hilbert spaces and let
$\varphi:X\times Y\to(-\infty,+\infty]$ be closed, proper, and convex.  Define the corresponding Lagrangian and the dual functions by
\[
  \ell(x,y):=\inf_{u\in Y}
  \{\varphi(x,u)-\inner{y}{u}\},
  \qquad
  \psi(y):=\varphi^*(0,y)=-\inf_{x\in X}\ell(x,y).
\]
Since $\varphi$ is proper, closed, and convex, its conjugate
$\varphi^*$ is proper, closed, and convex
\cite[Theorem~12.2]{Rockafellar1970ConvexAnalysis}. Hence, its restriction $\psi(y) = \varphi^*(0,y)$ is also closed and convex. Define the dual subgradient mapping $T_g:=\partial\psi$ and the Lagrangian
saddle mapping $T_\ell:X\times Y\To X\times Y$ by
\begin{equation}\label{eq:Tl-Tg-def}
  T_\ell(x,y)
  :=\set{(v,u)\in X\times Y\mid (v,y)\in\partial\varphi(x,u)}.
\end{equation}
The mapping $T_\ell$ is the partial inverse of $\partial\varphi$ relative to
$X\times\{0\}$, and is therefore maximal monotone
\cite{Spingarn1983}.  On $\HH=X\times Y$, take
\begin{equation}\label{eq:alm-splitting-metric}
  H_*=\{0\}\times Y,
  \qquad H_{**}=X\times\{0\},
  \qquad B(x,y)=(0,C^{-1}y),
\end{equation}
where $C:Y\to Y$ is a self-adjoint positive definite linear operator. Then, $\HH = H_*\oplus H_{**}$.  Using the canonical factor isometries
$Y\ni y\mapsto(0,y)\in H_*$ and
$X\ni x\mapsto(x,0)\in H_{**}$, the projected and completion mappings of
$T_\ell$ are written as
\begin{align*}
  (T_\ell)_*(y)
  =\set{u\in Y\mid \exists x\in X:
                    (0,y)\in\partial\varphi(x,u)},\qquad
  (T_\ell)_{**}(y,u)
  =\set{x\in X\mid
                    (0,y)\in\partial\varphi(x,u)}.
\end{align*}
Thus $u\in(T_\ell)_*(y)$ if and only if
$(T_\ell)_{**}(y,u)\ne\emptyset$, and the latter set consists of all primal
points that realize the pair $(y,u)$.
For $c>0$, define the augmented Lagrangian function
\[
  \ell_{c,C}(x,y):=\inf_{u\in Y}
  \left\{\varphi(x,u)-\inner{y}{u}
        +\frac{c}{2}\norm{u}_C^2\right\}.
\]
The next result shows that the exact ALM step is a semidefinite proximal step for $T_\ell$ and characterizes its implementability.

\begin{proposition}\label{prop:alm-semidefinite-ppa}
Fix $c>0$.  For any current point
$(\widehat x,\widehat y)\in X\times Y$, a pair
$(x^+,y^+)\in X\times Y$ solves the $(c,B)$-step for $T_\ell$ on
$X\times Y$,
\begin{equation}\label{eq:alm-semidefinite-inclusion}
  0\in T_\ell(x^+,y^+)
  +c^{-1}B\bigl((x^+,y^+)-(\widehat x,\widehat y)\bigr),
\end{equation}
if and only if there exists $u^+\in Y$ such that
\begin{equation}\label{eq:exact-alm-subproblem}
  (x^+,u^+) \in\argmin_{x\in X,\,u\in Y}
  \left\{\varphi(x,u)-\inner{\widehat y}{u}
        +\frac{c}{2}\norm{u}_C^2\right\}, \qquad 
  y^+ =\widehat y-cCu^+ .
\end{equation}
Equivalently, the joint minimization in
\eqref{eq:exact-alm-subproblem} can be carried out in two stages:
\[
  x^+\in\argmin_{x\in X}\ell_{c,C}(x,\widehat y),
  \qquad
  u^+\in\argmin_{u\in Y}
  \left\{\varphi(x^+,u)-\inner{\widehat y}{u}
        +\frac{c}{2}\norm{u}_C^2\right\}.
\]
Whenever the step exists, $u^+$ and $y^+$ are uniquely determined, whereas
the set of possible primal components is exactly
\(
  (T_\ell)_{**}(y^+,u^+).
\)

The semidefinite proximal scheme for $T_\ell$, relative to the splitting
in \eqref{eq:alm-splitting-metric}, is implementable if and only if
$(T_\ell)_*$ is maximal monotone, which is also equivalent to the existence of a minimizer in
\eqref{eq:exact-alm-subproblem} for every $\widehat y\in Y$.  If, in
addition, $\psi$ is proper, these conditions are further equivalent to
$(T_\ell)_*=T_g$.  In that case,
\[
  y^+=(I+cCT_g)^{-1} \widehat y.
\]
\end{proposition}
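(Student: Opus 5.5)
The plan is to unpack the $(c,B)$-step through Theorem~\ref{thm:step-reduction} and then translate each piece into subgradient optimality conditions for $\varphi$.

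\textbf{Step 1: the step as a subgradient condition.} In the splitting \eqref{eq:alm-splitting-metric}, the active variable is $y$, the completion variable is $x$, and $B_*=C^{-1}$. By Theorem~\ref{thm:step-reduction}, \eqref{eq:alm-semidefinite-inclusion} holds if and only if
\[
u^+:=-c^{-1}C^{-1}(y^+-\widehat y)\in(T_\ell)_*(y^+)
\quad\text{and}\quad
x^+\in(T_\ell)_{**}(y^+,u^+).
\]
By the displayed formulas for $(T_\ell)_*$ and $(T_\ell)_{**}$, this says exactly that $(0,y^+)\in\partial\varphi(x^+,u^+)$ with $y^+=\widehat y-cCu^+$.

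\textbf{Step 2: identification with the joint minimization.} Substituting $y^+=\widehat y-cCu^+$, the condition of Step~1 reads
\[
(0,\widehat y-cCu^+)\in\partial\varphi(x^+,u^+).
\]
By the sum rule for a closed proper convex function plus a smooth quadratic, this is the optimality condition
\[
0\in\partial\Bigl(\varphi-\inner{\widehat y}{\cdot}+\tfrac c2\norm{\cdot}_C^2\Bigr)(x^+,u^+),
\]
which gives \eqref{eq:exact-alm-subproblem}. The two-stage form follows from the definition of $\ell_{c,C}$ as a partial infimum in $u$. A joint minimizer realizes both infima. Conversely, if $x^+$ minimizes $\ell_{c,C}(\cdot,\widehat y)$ and $u^+$ attains the inner infimum at $x^+$, the pair is a joint minimizer. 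For fixed $x$ the inner function is strongly convex in $u$, so the inner minimizer is unique whenever it exists.

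\textbf{Step 3: uniqueness.} The active component $y^+$ is unique because $J_*=(I+cB_*^{-1}(T_\ell)_*)^{-1}$ is single-valued by monotonicity of $(T_\ell)_*$, which follows from Theorem~\ref{thm:implementability-equivalence}'s proof. Then $u^+=-c^{-1}C^{-1}(y^+-\widehat y)$ is unique too. The set of admissible $x^+$ is $(T_\ell)_{**}(y^+,u^+)$ by Theorem~\ref{thm:step-reduction}(b).

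\textbf{Step 4: implementability.} Theorem~\ref{thm:implementability-equivalence} applied to $T_\ell$, which is maximal monotone by \cite{Spingarn1983}, gives that implementability is equivalent to maximal monotonicity of $(T_\ell)_*$. By Steps~1--2, implementability means a solution exists for every $(\widehat x,\widehat y)$. Since the step does not depend on $\widehat x$, this is existence of a minimizer in \eqref{eq:exact-alm-subproblem} for every $\widehat y$.

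\textbf{Step 5: the case $\psi$ proper.} This is the main obstacle. First show the inclusion $(T_\ell)_*\subseteq T_g$: if $(0,y)\in\partial\varphi(x,u)$, then by conjugacy $(x,u)\in\partial\varphi^*(0,y)$. Restricting the subgradient inequality of $\varphi^*$ to points $(0,y')$ yields $u\in\partial\psi(y)$.

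When $\psi$ is proper, closed and convex, $T_g$ is maximal monotone by \cite[Corollary~31.5.2]{Rockafellar1970ConvexAnalysis}. So if $(T_\ell)_*$ is maximal monotone, the inclusion forces equality. Conversely, if $(T_\ell)_*=T_g$, then it is maximal monotone.

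The delicate point is that the inclusion $(T_\ell)_*\subseteq T_g$ can be strict in general, because $u\in\partial\psi(y)$ need not lift to a subgradient of $\varphi^*$ at $(0,y)$ with an $x$-component attained. Maximality is therefore exactly what closes this gap, so no further lifting argument is needed. Finally, with $(T_\ell)_*=T_g$ and $B_*=C^{-1}$, the last assertion of Theorem~\ref{thm:step-reduction} gives $y^+=(I+cCT_g)^{-1}\widehat y$.
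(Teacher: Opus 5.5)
Your proposal is correct and follows essentially the same route as the paper. Both arguments characterize the step by the subgradient relation $(0,\widehat y-cCu^+)\in\partial\varphi(x^+,u^+)$ and read it as the optimality condition of the joint problem. Both then invoke Theorem~\ref{thm:implementability-equivalence} for implementability, and use the inclusion $(T_\ell)_*\subseteq T_g$ together with maximal monotonicity of $T_g=\partial\psi$ for the proper-$\psi$ case. The only minor deviations are that you pass through Theorem~\ref{thm:step-reduction} instead of unpacking $B$ and $T_\ell$ directly, and that you obtain uniqueness of $y^+$ from single-valuedness of the monotone resolvent rather than from strong convexity of the objective in $u$. Both variants are equally valid.
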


\begin{proof}
Set $u^+:=c^{-1}C^{-1}(\widehat y-y^+)$.  By the definition of $B$,
\eqref{eq:alm-semidefinite-inclusion} is equivalent to
\[
  (0,u^+)\in T_\ell(x^+,y^+),
  \qquad y^+=\widehat y-cCu^+.
\]
By \eqref{eq:Tl-Tg-def}, this says
$(0,\widehat y-cCu^+)\in\partial\varphi(x^+,u^+)$, which is precisely the optimality
condition for \eqref{eq:exact-alm-subproblem}.  Minimizing first in $u$ gives
the equivalent marginal formulation.  The objective in
\eqref{eq:exact-alm-subproblem} is strongly convex in $u$, so all its
minimizers have the same $u^+$ and hence the same $y^+$.  The definition of
$(T_\ell)_{**}$ gives the asserted set of possible $x^+$.

The pointwise equivalence above, together with the fact that $\widehat x$
does not enter the step and
Theorem~\ref{thm:implementability-equivalence}, proves the implementability
characterization.  Finally,
$(0,y)\in\partial\varphi(x,u)$ implies
$(x,u)\in\partial\varphi^*(0,y)$, and hence $u\in\partial\psi(y)$.  Thus
$(T_\ell)_*\subset T_g$.  When $\psi$ is proper, its closedness and convexity
noted above imply that $T_g=\partial\psi$ is maximal monotone
\cite[Theorem~12.17]{RockafellarWets1998}.  The inclusion above then shows
that $(T_\ell)_*$ is maximal monotone if and only if $(T_\ell)_*=T_g$.
Finally, the last assertion follows from 
$y^+=(I+cC(T_\ell)_*)^{-1}\widehat y$.
\end{proof}

\begin{remark}\label{rem:alm-primal-realizability}
Suppose that $\psi$ in Proposition~\ref{prop:alm-semidefinite-ppa} is proper.  Then, the mapping $T_g=\partial\psi$ governs the
dual PPA \cite{Rockafellar1976PPA}, whereas $(T_\ell)_*$ retains only the dual graph pairs that admit a
primal realization.  Thus $(T_\ell)_*\subset T_g$ can be strict. Indeed, properness of
$\psi$ guarantees a dual proximal update, but not attainment of the
corresponding exact ALM subproblem.
\end{remark}

\begin{proof}[{\bf Detail}]
For example, let $X=Y=\RR$ and define
\[
  \varphi(x,u):=
  \begin{cases}
    u\exp(x/u), & 0<u\le 1,\\
    0, & u=0\text{ and }x\le0,\\
    +\infty, & \text{otherwise}.
  \end{cases}
\]
This is the closed perspective of the exponential function, restricted to
$u\le1$, and is therefore proper, closed, and convex.  Since
$\inf_x u\exp(x/u)=0$ for every $u>0$,
\[
  \psi(y)=\varphi^*(0,y)
  =\sup_{0\le u\le1}uy
  =\max\{0,y\}.
\]
Hence
\[
  T_g(y)=
  \begin{cases}
    \{0\}, & y<0,\\
    [0,1], & y=0,\\
    \{1\}, & y>0.
  \end{cases}
\]
If $u>0$, the first component of every subgradient of $\varphi$ at
$(x,u)$ is $\exp(x/u)>0$.  If $u=0$ and $x\le0$, the subgradient inequality
gives
\[
  (0,y)\in\partial\varphi(x,0)
  \quad\Longleftrightarrow\quad y\le0.
\]
Indeed, sufficiency follows from $\varphi\ge0$, whereas necessity follows by
fixing $u'>0$ and letting $x'\to-\infty$.  Consequently,
\[
  (T_\ell)_*(y)=
  \begin{cases}
    \{0\}, & y\le0,\\
    \emptyset, & y>0,
  \end{cases}
  \qquad
  \zer T_\ell=(-\infty,0]\times(-\infty,0]\ne\emptyset.
\]
Thus $(T_\ell)_*\subsetneq T_g$.  The completion fibers in this example satisfy
$(T_\ell)_{**}(y,0)=(-\infty,0]$ for $y\le0$ and are therefore unbounded. Consequently, given $c>0$, the dual proximal step associated with $T_g$ is well defined for every $\widehat y$. However, primal realizability may fail. In particular, when $\widehat y>0$, the corresponding exact ALM subproblem has no primal minimizer, i.e., its
infimum is finite but is approached only along sequences with
$x\to-\infty$.
\end{proof}

\section{Semidefinite progressive decoupling}\label{sec:applications}

Progressive decoupling applies the proximal point method to Spingarn's
partial inverse of a linkage problem.  We first recall the classical
positive definite step, then use a pair of brackets to retain proximal
regularization only on a prescribed active subspace.  This gives a
semidefinite PPA for the same partial inverse.  We subsequently derive its
realization in the original variables and apply the preceding
implementability theory.

\subsection{From the classical step to a semidefinite PPA}
\label{subsec:progressive-brackets}

Let $\mathcal M:\HH\To\HH$ be maximal monotone and let $S\subset\HH$ be a
linear subspace.  The associated linkage problem is
\begin{equation}\label{eq:linkage-problem}
  \text{find }(\bar z,\bar w)\in S\times S^\perp
  \quad\text{such that}\quad
  \bar w\in\mathcal M(\bar z).
\end{equation}
Let $\mathcal A:=\mathcal M^\#$ be Spingarn's partial inverse with respect to
$S$ \cite{Spingarn1983}.  Thus $(x^\#,u^\#)\in\gph\mathcal A$ if and only if
some $(z,w)\in\gph\mathcal M$ satisfies
\begin{equation}\label{eq:partial-inverse-coordinate-swap}
  x^\#=P_Sz+P_{S^\perp}w,
  \qquad
  u^\#=P_Sw+P_{S^\perp}z.
\end{equation}
Note that the mapping $\mathcal A$ is maximal monotone \cite[Proposition~2.1]{Spingarn1983}, and
\(
  \zer\mathcal A
  =
  \set{z+w\mid z\in S,\ w\in S^\perp,\ w\in\mathcal M(z)}.
\)
Hence \eqref{eq:linkage-problem} is equivalent to finding a zero of
$\mathcal A$.

Let $G_k:S\to S$ and $D_k:S^\perp\to S^\perp$ be self-adjoint and positive
definite, and set
\[
  \widetilde B_k:=G_kP_S+D_k^{-1}P_{S^\perp}.
\]
The classical progressive decoupling method is the following positive
definite PPA for $\mathcal A$ with metric $\widetilde B_k$
\cite{Rockafellar2019Progressive,Rockafellar2019ProgressiveD,Rockafellar2024PMM}:
\[
  0\in
  \mathcal A(x^{\#,k+1})
  +c_k^{-1}\widetilde B_k(x^{\#,k+1}-x^{\#,k}).
\]
Suppose that $x^{\#,k}=z_S^k+w_\perp^k$, where $z_S^k\in S$ and
$w_\perp^k\in S^\perp$.  The partial-inverse relation
\eqref{eq:partial-inverse-coordinate-swap} shows that the preceding
inclusion is equivalent to finding
$(\widehat z^{k+1},\widehat w^{k+1})\in\gph\mathcal M$ satisfying
\begin{equation*}\label{eq:positive-progressive-graph-step}
\begin{aligned}
  P_S\widehat w^{k+1}
  +c_k^{-1}G_k(P_S\widehat z^{k+1}-z_S^k)&=0,\\
  P_{S^\perp}\widehat z^{k+1}
  +c_k^{-1}D_k^{-1}
  (P_{S^\perp}\widehat w^{k+1}-w_\perp^k)&=0,
\end{aligned}
\end{equation*}
and then setting
\(
  x^{\#,k+1}
  :=P_S\widehat z^{k+1}+P_{S^\perp}\widehat w^{k+1}.
\)

To obtain a semidefinite variant, suppose that subspaces $S_0$ and $S_1$
are given with
\begin{equation}\label{eq:bracketing-subspaces}
  S_0\subset S\subset S_1,
  \qquad
  S_1^\perp\subset S^\perp\subset S_0^\perp.
\end{equation}
We use them as brackets for $S$.  Proximal regularization is retained on
the $S_0$-component of $z$ and the $S_1^\perp$-component of $w$, and the
remaining components are left unregularized.  These brackets induce the following
active--completion splitting.
Set
\(
  K_0:=S\cap S_0^\perp\),
\(  K_1:=S^\perp\cap S_1.
\)
Then
\[
\begin{aligned}
  S=S_0\oplus K_0, \quad
  S^\perp=K_1\oplus S_1^\perp,\quad 
  H_*:=S_0\oplus S_1^\perp, \quad 
  H_{**}:=K_0\oplus K_1,\quad 
  \HH=H_*\oplus H_{**}.
\end{aligned}
\]
Under \eqref{eq:partial-inverse-coordinate-swap}, the $H_*$-components of
$x^\#$ are $P_{S_0}z$ and $P_{S_1^\perp}w$, whereas its
$H_{**}$-components are $P_{K_0}z$ and $P_{K_1}w$.
Let $A_k:S_0\to S_0$ and
$C_k:S_1^\perp\to S_1^\perp$ be self-adjoint and positive definite.  Define
\begin{equation*}\label{eq:progressive-semidefinite-metric}
  B_k:=A_kP_{S_0}+C_k^{-1}P_{S_1^\perp}.
\end{equation*}
Relative to the decomposition
$\HH = S_0\oplus K_0\oplus K_1\oplus S_1^\perp$,
\[
  B_k=\operatorname{diag}(A_k,0,0,C_k^{-1}),
  \qquad
  \rge B_k=H_*,
  \qquad
  \ker B_k=H_{**},
  \qquad
  B_{k,*}=A_k\oplus C_k^{-1}.
\]
Relative to the positive definite metric $\widetilde B_k$, $B_k$ retains proximal
regularization only on $S_0$ and $S_1^\perp$ and vanishes on
$K_0\oplus K_1=H_{**}$.  The brackets therefore define the following
semidefinite PPA for the partial inverse $\mathcal A=\mathcal M^\#$.

\begin{algorithm}[Semidefinite PPA for the partial inverse]
\label{alg:semidefinite-progressive}
Choose $x^{\#,0}\in\HH$.  Given $x^{\#,k}$, choose
$x^{\#,k+1}$ satisfying
\begin{equation}\label{eq:progressive-abstract-step}
  0\in
  \mathcal A(x^{\#,k+1})
  +c_k^{-1}B_k(x^{\#,k+1}-x^{\#,k}).
\end{equation}
\end{algorithm}
Its realization, derived next, is the semidefinite
progressive decoupling step associated with the brackets $S_0$ and $S_1$.

\subsection{Bracketed realization and consequences}
\label{subsec:progressive-implementability}

Algorithm~\ref{alg:semidefinite-progressive} is stated on the partial-inverse
space.  Translating its inclusion through
\eqref{eq:partial-inverse-coordinate-swap} gives the associated semidefinite
progressive decoupling equations in the original variables of $\mathcal M$.
For an iterate $x^{\#,k}$, write
\(
  z_0^k:=P_{S_0}x^{\#,k} \),
  \(
  w_1^k:=P_{S_1^\perp}x^{\#,k}\).
  Then
  \(x_*^k=z_0^k+w_1^k.\)
The following proposition makes this translation precise.

\begin{proposition}\label{prop:progressive-step-realizations}
A point $x^{\#,k+1}$ satisfies
\eqref{eq:progressive-abstract-step} if and only if there is a pair
$(\widehat z^{k+1},\widehat w^{k+1})\in\gph\mathcal M$ such that
\begin{equation}\label{eq:progressive-graph-step}
\begin{aligned}
  & P_{S_0}\widehat w^{k+1}
  +c_k^{-1}A_k
  (P_{S_0}\widehat z^{k+1}-z_0^k) =0, \qquad 
  P_{K_0}\widehat w^{k+1} =0,\\
  & P_{K_1}\widehat z^{k+1} =0,\qquad
  P_{S_1^\perp}\widehat z^{k+1}
  +c_k^{-1}C_k^{-1}
  (P_{S_1^\perp}\widehat w^{k+1}-w_1^k)=0,
\end{aligned}
\end{equation}
and
\(
  x^{\#,k+1}
  =
  P_S\widehat z^{k+1}
  +P_{S^\perp}\widehat w^{k+1}.
\)
For such a realization, set
\[
\begin{aligned}
  x_*^{k+1}
  :=
  P_{S_0}\widehat z^{k+1}
  +P_{S_1^\perp}\widehat w^{k+1},\quad 
  x_{**}^{k+1}
  :=
  P_{K_0}\widehat z^{k+1}
  +P_{K_1}\widehat w^{k+1}.
\end{aligned}
\]
Then
\begin{align}
  0
  \in
  \mathcal A_*(x_*^{k+1})
  +c_k^{-1}B_{k,*}(x_*^{k+1}-x_*^k),
\qquad  x_{**}^{k+1}
  \in
  \mathcal A_{**}(x_*^{k+1},-c_k^{-1}B_{k,*}(x_*^{k+1}-x_*^k)).
  \label{eq:progressive-completion-step}
\end{align}
The system \eqref{eq:progressive-graph-step} is also equivalent to finding
$\widehat z^{k+1}$ satisfying
\begin{equation}\label{eq:progressive-constrained-inclusion}
\begin{split}
  0
  \in
  \mathcal M(\widehat z^{k+1})
  +N_{K_1^\perp}(\widehat z^{k+1}) +c_k^{-1}A_k
  (P_{S_0}\widehat z^{k+1}-z_0^k)
  +c_kC_kP_{S_1^\perp}\widehat z^{k+1} - w_1^k.
\end{split}
\end{equation}
\end{proposition}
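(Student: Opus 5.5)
The plan is to unfold the abstract inclusion \eqref{eq:progressive-abstract-step} through the coordinate swap \eqref{eq:partial-inverse-coordinate-swap}, working in the orthogonal decomposition $\HH=S_0\oplus K_0\oplus K_1\oplus S_1^\perp$. Here $S=S_0\oplus K_0$ and $S^\perp=K_1\oplus S_1^\perp$.

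First, by definition of $\mathcal A=\mathcal M^\#$, the inclusion \eqref{eq:progressive-abstract-step} holds if and only if there is $(\widehat z,\widehat w)\in\gph\mathcal M$ such that
\[
x^{\#,k+1}=P_S\widehat z+P_{S^\perp}\widehat w
\quad\text{and}\quad
u^\#:=P_S\widehat w+P_{S^\perp}\widehat z
\]
satisfies $0=u^\#+c_k^{-1}B_k(x^{\#,k+1}-x^{\#,k})$. I will read this equation off component by component.
\begin{itemize}
\item On $S_0$: $u^\#$ contributes $P_{S_0}\widehat w$. The component of $x^{\#,k+1}$ is $P_{S_0}\widehat z$, that of $x^{\#,k}$ is $z_0^k$, and $B_k$ acts there as $A_k$.
\item On $K_0$ and on $K_1$: $B_k$ vanishes, so the equation reduces to $P_{K_0}\widehat w=0$ and $P_{K_1}\widehat z=0$.
\item On $S_1^\perp$: $u^\#$ contributes $P_{S_1^\perp}\widehat z$, the relevant $x^\#$-component is $P_{S_1^\perp}\widehat w$ against $w_1^k$, and $B_k$ acts as $C_k^{-1}$.
\end{itemize}
This gives exactly the four equations \eqref{eq:progressive-graph-step}. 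The only care needed is to note that $A_k$ maps $S_0$ into $S_0$ and $C_k^{-1}$ maps $S_1^\perp$ into $S_1^\perp$, so no cross terms arise.

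Second, for \eqref{eq:progressive-completion-step}, note that $x_*^{k+1}$ and $x_{**}^{k+1}$ as defined are exactly $P_*x^{\#,k+1}$ and $P_{**}x^{\#,k+1}$ for the splitting $H_*=S_0\oplus S_1^\perp$, $H_{**}=K_0\oplus K_1$. Likewise $x_*^k=z_0^k+w_1^k=P_*x^{\#,k}$. Then Theorem~\ref{thm:step-reduction}, applied with $T=\mathcal A$, $c=c_k$ and the aligned operator $B=B_k$ with $B_{k,*}=A_k\oplus C_k^{-1}$, converts the full inclusion directly into the active relation and the completion relation. No further computation is needed.

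Third, for \eqref{eq:progressive-constrained-inclusion}, I will solve the equations of \eqref{eq:progressive-graph-step} for the components of $\widehat w$:
\[
P_{S_0}\widehat w=-c_k^{-1}A_k(P_{S_0}\widehat z-z_0^k),\qquad
P_{K_0}\widehat w=0,\qquad
P_{S_1^\perp}\widehat w=w_1^k-c_kC_kP_{S_1^\perp}\widehat z,
\]
while $P_{K_1}\widehat w$ is left free. Summing these gives
\[
-\widehat w=c_k^{-1}A_k(P_{S_0}\widehat z-z_0^k)+c_kC_kP_{S_1^\perp}\widehat z-w_1^k+n,
\qquad n:=-P_{K_1}\widehat w\in K_1 .
\]
The remaining condition $P_{K_1}\widehat z=0$ says $\widehat z\in K_1^\perp$. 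In that case $N_{K_1^\perp}(\widehat z)=K_1$, so $n\in N_{K_1^\perp}(\widehat z)$ and \eqref{eq:progressive-constrained-inclusion} follows.

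For the converse, take an element $n\in N_{K_1^\perp}(\widehat z)$ realizing the inclusion. Nonemptiness of the normal cone forces $\widehat z\in K_1^\perp$, i.e.\ $P_{K_1}\widehat z=0$, and $n\in K_1$. Define $\widehat w$ as minus the sum of the remaining terms; then $\widehat w\in\mathcal M(\widehat z)$. Projecting this identity onto $S_0$, $K_0$ and $S_1^\perp$ recovers the other three equations of \eqref{eq:progressive-graph-step}, using that $z_0^k\in S_0$, $w_1^k\in S_1^\perp$ and $n\in K_1$.

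The main obstacle is purely bookkeeping: keeping track of which ranges the operators land in, so that each projection isolates exactly one term. No analytic difficulty is expected.
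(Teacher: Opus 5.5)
Your proposal is correct and follows essentially the same route as the paper. You read the inclusion componentwise in $\HH=S_0\oplus K_0\oplus K_1\oplus S_1^\perp$ via the coordinate swap, and obtain the active and completion relations directly from the definitions of $\mathcal A_*$ and $\mathcal A_{**}$, which you route through Theorem~\ref{thm:step-reduction}; the equivalence with the constrained inclusion likewise comes from absorbing $-P_{K_1}\widehat w^{k+1}$ into $N_{K_1^\perp}(\widehat z^{k+1})=K_1$, with the converse obtained by projecting onto $S_0$, $K_0$, $S_1^\perp$, exactly as the paper does.
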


\begin{proof}
Relative to the decomposition
$\HH = S_0\oplus K_0\oplus K_1\oplus S_1^\perp$, the four components of
$x^{\#,k+1}$ are
\[
  P_{S_0}\widehat z^{k+1},\quad
  P_{K_0}\widehat z^{k+1},\quad
  P_{K_1}\widehat w^{k+1},\quad
  P_{S_1^\perp}\widehat w^{k+1},
\]
and those of a corresponding value of $\mathcal A$ are
\[
  P_{S_0}\widehat w^{k+1},\quad
  P_{K_0}\widehat w^{k+1},\quad
  P_{K_1}\widehat z^{k+1},\quad
  P_{S_1^\perp}\widehat z^{k+1}.
\]
Thus \eqref{eq:progressive-abstract-step} is equivalent to
\eqref{eq:progressive-graph-step}, and \eqref{eq:progressive-completion-step}
follows from the definitions of $\mathcal A_*$ and
$\mathcal A_{**}$.

Lastly, we show the equivalence of \eqref{eq:progressive-graph-step} and \eqref{eq:progressive-constrained-inclusion}.  
From \eqref{eq:progressive-graph-step}, one has
\[
P_{{K_1}^\perp}\widehat w^{k+1} = c_k^{-1}A_k(z_0^k - P_{S_0}\widehat z^{k+1}) - c_kC_kP_{S_1^\perp}\widehat z^{k+1} + w_1^k \in {K_1}^\perp, \quad \widehat z^{k+1} \in K_1^\perp.
\]
Since $\widehat w^{k+1} \in \mathcal M(\widehat z^{k+1})$, one has
\[
0\in \mathcal M(\widehat z^{k+1}) - P_{K_1} \widehat w^{k+1} + c_k^{-1}A_k(P_{S_0}\widehat z^{k+1}-z_0^k) + c_kC_kP_{S_1^\perp}\widehat z^{k+1} - w_1^k,
\]
which implies \eqref{eq:progressive-constrained-inclusion} by noting that $-P_{K_1}\widehat w^{k+1} \in N_{K_1^\perp}(\widehat z^{k+1})= K_1$.  Conversely, if \eqref{eq:progressive-constrained-inclusion} holds, then $\widehat z^{k+1} \in K_1^\perp$ and there exist $\widehat w^{k+1} \in \mathcal M(\widehat z^{k+1})$ and $n^{k+1} \in N_{K_1^\perp}(\widehat z^{k+1}) $ such that
\[
0 = \widehat w^{k+1} + n^{k+1} + c_k^{-1}A_k(P_{S_0}\widehat z^{k+1}-z_0^k) + c_kC_kP_{S_1^\perp}\widehat z^{k+1} - w_1^k,
\]
which implies \eqref{eq:progressive-graph-step} by applying the projections $P_{S_0}$, $P_{K_0}$, and $P_{S_1^\perp}$ to both sides of the equation, respectively.
\end{proof}

The preceding representation identifies the bracketed equations with a
semidefinite proximal step for the partial inverse
$\mathcal A=\mathcal M^\#$.  We now apply the general criterion of
Section~\ref{sec:projection} to characterize its implementability.

\begin{corollary}
\label{thm:progressive-implementability}
Fix the brackets \eqref{eq:bracketing-subspaces}, $c>0$, and
positive definite $A:S_0\to S_0$ and $C:S_1^\perp\to S_1^\perp$.
The corresponding semidefinite progressive decoupling step is implementable
if and only if
\(
  \mathcal A_*=(\mathcal M^\#)_*\)
is maximal monotone on $H_*$.

\end{corollary}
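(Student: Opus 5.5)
The plan is to reduce the corollary to Theorem~\ref{thm:implementability-equivalence}, applied with $T:=\mathcal A=\mathcal M^\#$ on $\HH$ and the bracket-induced splitting $\HH=H_*\oplus H_{**}$, where $H_*=S_0\oplus S_1^\perp$ and $H_{**}=K_0\oplus K_1$. Three ingredients need to be checked.

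First, $\mathcal A$ is monotone, indeed maximal monotone, by \cite[Proposition~2.1]{Spingarn1983}, so Theorem~\ref{thm:implementability-equivalence} applies to it.

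Second, the operator $B:=AP_{S_0}+C^{-1}P_{S_1^\perp}$ is aligned with this splitting. In the decomposition $\HH=S_0\oplus K_0\oplus K_1\oplus S_1^\perp$ it reads $\operatorname{diag}(A,0,0,C^{-1})$. It is self-adjoint because $A$ and $C$ are self-adjoint and the projections are orthogonal, and it is positive semidefinite. Its range is exactly $S_0\oplus S_1^\perp=H_*$ and its kernel is exactly $K_0\oplus K_1=H_{**}$, since $A$ and $C^{-1}$ are invertible on their respective subspaces. Behind this lies the orthogonality of the four summands, which follows from $S_0\subset S\subset S_1$:
\begin{itemize}
\item $S=S_0\oplus K_0$ and $S^\perp=K_1\oplus S_1^\perp$;
\item $S\perp S^\perp$, so the summands of $S$ are orthogonal to those of $S^\perp$.
\end{itemize}

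Third, the semidefinite progressive decoupling step is, by definition and by Proposition~\ref{prop:progressive-step-realizations}, solvable from a given $x^{\#,k}$ exactly when the inclusion $0\in\mathcal A(x^{\#,k+1})+c^{-1}B(x^{\#,k+1}-x^{\#,k})$ of Algorithm~\ref{alg:semidefinite-progressive} has a solution. Because the bracketed equations \eqref{eq:progressive-graph-step} are equivalent to that inclusion, implementability of the bracketed step in the original variables means precisely that the $(c,B)$-step for $\mathcal A$ is implementable in the sense of Definition~\ref{def:implementability}.

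The equivalence (a)$\Leftrightarrow$(b) of Theorem~\ref{thm:implementability-equivalence} then says that this holds if and only if $\mathcal A_*$ is maximal monotone on $H_*$. By (c), it then also holds for every $c>0$ and every positive definite $A$ and $C$, although the corollary only needs the fixed choice.

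There is no real obstacle here. The only points needing care are confirming that ``implementable'' for the bracketed step is read through the equivalence of Proposition~\ref{prop:progressive-step-realizations}, and verifying the range and kernel of $B$ exactly, which rests on the orthogonal four-fold decomposition.
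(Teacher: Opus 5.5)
Your proposal is correct and is essentially the paper's argument. The paper gives no separate proof: it applies Theorem~\ref{thm:implementability-equivalence} to the (maximal) monotone partial inverse $\mathcal A=\mathcal M^\#$ with the aligned metric $B=\operatorname{diag}(A,0,0,C^{-1})$, and reads implementability of the bracketed step through Proposition~\ref{prop:progressive-step-realizations}, exactly as you do. One small point of precision: for your fixed $(c,A,C)$, the direction ``$\mathcal A_*$ maximal monotone $\Rightarrow$ implementable'' comes from (b)$\Rightarrow$(c) of that theorem, not from (a)$\Leftrightarrow$(b) alone, since (a) only asserts implementability for some pair; you in effect supply this when you invoke (c).
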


Two conditions from Section~\ref{sec:conditions} are particularly useful.
If one completion fiber of $\mathcal A_{**}$ is bounded, then
Theorem~\ref{thm:one-bounded-fiber} gives implementability, makes every
completion fiber compact, and makes $\mathcal A_{**}$ locally bounded.  If
$\mathcal M$ is piecewise
polyhedral and \eqref{eq:linkage-problem} has a solution, then
$\mathcal A$ is piecewise polyhedral and
Corollary~\ref{cor:polyhedral-implementability} gives implementability, but
does not by itself control the completions. The second condition applies directly to block-separable piecewise linear-quadratic (PLQ) models.

\begin{corollary}\label{cor:block-plq-implementability}
Let $X_1,\ldots,X_m$ and $Y$ be finite-dimensional Hilbert spaces and set
$\HH:=X_1\times\cdots\times X_m$.  Let $L_i:X_i\to Y$ be linear, and consider
\[
  \min_{x=(x_1,\ldots,x_m)}
  \Phi(x):=\sum_{i=1}^m\phi_i(x_i)
  \quad\text{subject to}\quad
  Lx:=\sum_{i=1}^mL_ix_i=0,
\]
where every $\phi_i$ is closed, proper, convex, and piecewise
linear--quadratic.  Set $S:=\ker L$ and
$\mathcal M:=\partial\Phi$.  Suppose that there are $\tilde x$ and 
$\tilde y$ such that
\[
  L\tilde x=0,
  \qquad
  -L_i^*\tilde y\in\partial\phi_i(\tilde x_i)
  \quad(i=1,\ldots,m).
\]
Then, for every pair of brackets $S_0\subset S\subset S_1$, the mapping
$\mathcal A_*=(\mathcal M^\#)_*$ is maximal monotone, and the
semidefinite progressive decoupling step is implementable.
\end{corollary}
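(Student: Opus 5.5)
The plan is to reduce the claim to Corollary~\ref{cor:polyhedral-implementability} applied to the partial inverse $\mathcal A=\mathcal M^\#$ on $\HH$, with the splitting $H_*=S_0\oplus S_1^\perp$, $H_{**}=K_0\oplus K_1$ induced by the brackets. That corollary needs three ingredients: $\mathcal A$ is maximal monotone, $\mathcal A$ is piecewise polyhedral, and $\zer\mathcal A\ne\emptyset$. Once these hold, it yields maximal monotonicity of $\mathcal A_*$ for every fixed splitting, and in particular for the one produced by an arbitrary pair $S_0\subset S\subset S_1$. Corollary~\ref{thm:progressive-implementability}, or directly Theorem~\ref{thm:implementability-equivalence}, then gives implementability.

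\emph{Maximality and polyhedrality.} Since $\Phi$ is a separable sum of closed proper convex functions, it is closed proper convex, and $\mathcal M=\partial\Phi=\partial\phi_1\times\cdots\times\partial\phi_m$ is maximal monotone. Spingarn's result \cite[Proposition~2.1]{Spingarn1983} then makes $\mathcal A$ maximal monotone. For a piecewise linear--quadratic convex $\phi_i$, the graph of $\partial\phi_i$ is a finite union of polyhedral convex sets \cite[Proposition~12.30]{RockafellarWets1998}. A finite Cartesian product of such unions is again such a union, so $\gph\mathcal M$ is piecewise polyhedral. By \eqref{eq:partial-inverse-coordinate-swap}, $\gph\mathcal A$ is the image of $\gph\mathcal M$ under the linear isomorphism $(z,w)\mapsto(P_Sz+P_{S^\perp}w,\;P_Sw+P_{S^\perp}z)$. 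Linear images of polyhedral sets are polyhedral, so $\mathcal A$ is piecewise polyhedral.

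\emph{Nonempty zero set.} I would produce a solution of the linkage problem \eqref{eq:linkage-problem} from the KKT pair. Take $\bar z:=\tilde x\in\ker L=S$ and $\bar w:=-L^*\tilde y=(-L_1^*\tilde y,\ldots,-L_m^*\tilde y)$. Then $\bar w\in\rge L^*=(\ker L)^\perp=S^\perp$, and $\bar w\in\partial\phi_1(\tilde x_1)\times\cdots\times\partial\phi_m(\tilde x_m)=\mathcal M(\bar z)$ by the KKT inclusions. Hence $\bar z+\bar w\in\zer\mathcal A$.

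The only step needing care is polyhedrality: one must check that the product-of-unions and linear-image arguments match the paper's notion of ``piecewise polyhedral'' (graph a finite union of polyhedral convex sets). Once that is confirmed, the remaining steps are direct verifications.
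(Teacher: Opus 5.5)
Your proposal is correct and follows essentially the paper's own argument: $\mathcal M$ is maximal monotone and piecewise polyhedral by \cite[Proposition~12.30]{RockafellarWets1998}, the KKT pair yields a solution of the linkage problem so that $\zer\mathcal A\ne\emptyset$, and Corollaries~\ref{cor:polyhedral-implementability} and \ref{thm:progressive-implementability} finish the proof. The only difference is that the paper cites \cite[Lemma~2.1]{LiRockafellarSun2026Polyhedral} for piecewise polyhedrality of $\mathcal A=\mathcal M^\#$, whereas you verify it directly: $\gph\mathcal A$ is the image of $\gph\mathcal M$ under a linear involution, which is a valid self-contained substitute.
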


\begin{proof}
The mapping $\mathcal M$ is maximal monotone and piecewise polyhedral \cite[Proposition 12.30]{RockafellarWets1998}, and
the displayed KKT pair gives a solution of
\eqref{eq:linkage-problem}.  
Then \cite[Lemma 2.1]{LiRockafellarSun2026Polyhedral} implies that the partial inverse \(\mathcal A=\mathcal M^\#\) is also piecewise polyhedral.
Therefore, Corollaries \ref{cor:polyhedral-implementability} and \ref{thm:progressive-implementability} apply.
\end{proof}

We next give an explicit realization of the bracketed progressive decoupling step for a
linearly constrained separable convex problem.  Using the auxiliary-variable
construction of \cite[Section~5]{Rockafellar2026AdvancesPDA}, we obtain
independent local augmented Lagrangian subproblems followed by an averaging
step.  

\begin{example}\label{ex:alm-decomposition-semidefinite}
For $j=1,\ldots, m$, let $f_j:X_j\to(-\infty,+\infty]$ be proper, closed, and convex.
Given linear maps $L_j:X_j\to Y$ between finite-dimensional Hilbert spaces, consider
\begin{equation}\label{eq:almpda-model}
  \min_{x_1,\ldots,x_m}\left\{  \sum_{j=1}^m f_j(x_j)
  \mid \sum_{j=1}^m L_jx_j=b \right\},
\end{equation} 
where, to remove trivial cases, each $L_j$ is assumed to be nonzero.
For a given $r>0$, set self-adjoint positive semidefinite operators
\begin{equation}\label{eq:almpda-local-metric}
	Q_j:=\tau_jI-rL_j^*L_j\succeq0 \quad \mbox{with} \quad \tau_j\ge r\norm{L_j}^2, \qquad j=1,\ldots,m.
\end{equation}
Choose $b_j\in Y$ with $\sum_{j=1}^m b_j = b$, and initialize $x_j^0\in X_j$, $y^0\in Y$, and $u_j^0\in Y$ with
$\sum_{j=1}^m u_j^0=0$. The resulting semidefinite progressive decoupling iteration is, for $j=1,\ldots,m$:
\begin{equation}\label{eq:almpda-local-update}
\begin{aligned}
  & x_j^{k+1}=\operatorname{prox}_{f_j/\tau_j}(x_j^k-\tau_j^{-1}L_j^*
  \bigl[y^k+r(L_jx_j^k-b_j+u_j^k)\bigr]),
  \qquad a_j^{k+1} =L_jx_j^{k+1}-b_j,\\
  &
  u_j^{k+1}=-a_j^{k+1}+\frac1m\sum_{\ell=1}^m a_\ell^{k+1},\qquad
  y^{k+1}=y^k+ \frac{r}{m}\sum_{\ell=1}^m a_\ell^{k+1},
\end{aligned}
\end{equation}
where $ \operatorname{prox}_{f_j/\tau_j}(v):=\arg\min_x\{f_j(x)+(\tau_j/2)\|x-v\|^2\}.$
\end{example}

\begin{proof}[{\bf Detail}]
Write
$X:=\prod_jX_j$, $U:=Y^m$, and $U_0:=\{u\in U \mid \textstyle\sum_ju_j=0\}$.
Then, $U_0^\perp=\{\iota (y) \mid y\in Y\}$ and $P_{U_0^\perp}u=\iota(m^{-1}\sum_ju_j)$ with $
  \iota(y):=(y,\ldots,y).$
Consider the following lifted optimization problem:
\begin{equation}\label{eq:almpda-lifting}
 \min\left\{ \Phi(x,u):=\sum_{j=1}^m
       \bigl[f_j(x_j)+\delta_{\{0\}}(L_jx_j-b_j+u_j)\bigr] \mid (x,u)\in S:=X\times U_0 \right\}.
\end{equation}
Note that \eqref{eq:almpda-lifting} is equivalent to \eqref{eq:almpda-model}. Since $\Phi$ is proper,
closed, and convex,  $\mathcal M:=\partial\Phi$ is maximal monotone.
More explicitly, for
$(x,u),(v,y)\in X\times U$, it holds that
\begin{equation*}\label{eq:almpda-monotone-mapping}
  (v,y)\in\mathcal M(x,u)
  \quad\Longleftrightarrow\quad
  \left\{
  \begin{aligned}
    & L_jx_j-b_j+u_j=0,\\
    &v_j \in\partial f_j(x_j)+L_j^*y_j,
  \end{aligned}
  \right.
  \quad j=1,\ldots,m.
\end{equation*}
The associated linkage problem is 
\[
\text{find }((\bar x, \bar u), (0,\iota(\bar y)) )\in S\times S^\perp
\quad\text{such that}\quad
(0,\iota(\bar y)) \in\mathcal M(\bar x, \bar u).
\]
 Since $S^\perp=\{0\}\times U_0^\perp$, 
 \eqref{eq:partial-inverse-coordinate-swap} implies that
 \[
 (v,y)\in\mathcal M(x,u)  \quad\Longleftrightarrow\quad 
 u^\#\in\mathcal A(x^\#) \mbox { with } x^\# =(x,P_{U_0}u+P_{U_0^\perp}y), \quad u^\# =(v,P_{U_0}y+P_{U_0^\perp}u).
 \]

We now choose the brackets and metric.  Let
$Q:=\operatorname{diag}(Q_1,\ldots,Q_m)$, and take
\[
  S_0:=\rge Q \times U_0,\qquad S_1:=S,\qquad c_k\equiv1,
\]
with $A_k=Q|_{\rge Q}\oplus rI_{U_0}$ and $C_k=rI_{U_0^\perp}$.
Thus, with respect to the product coordinates $(x,u)\in\HH=X\times U$, we obtain
\begin{equation*}\label{eq:almpda-partial-inverse-metric}
  B_k=\begin{bmatrix}
  	Q & 0 \\
  	0 & rP_{U_0} + r^{-1} P_{U_0^\perp}
  \end{bmatrix},\qquad
  H_*=\rge Q\oplus U,\qquad
  H_{**}=\ker Q\oplus\{0\}.
\end{equation*}
Then, the bracketed semidefinite progressive decoupling step \eqref{eq:progressive-graph-step} can be written as finding
\[
  (v^{k+1},\widehat y^{k+1})
  \in\mathcal M(x^{k+1},\widehat u^{k+1})
\]
such that
\begin{equation*}\label{eq:almpda-graph-step}
\begin{aligned}
   & v^{k+1}+Q(x^{k+1}-x^k) =0,\quad
  P_{U_0}\widehat y^{k+1}
    +r(P_{U_0}\widehat u^{k+1}-u^k) =0,\\
   &P_{U_0^\perp}\widehat u^{k+1}
    +r^{-1}(P_{U_0^\perp}\widehat y^{k+1}-\iota y^k) =0.
\end{aligned}
\end{equation*}
The last two equations combine to give
\[
  \widehat y^{k+1}=\iota (y^k)-r(\widehat u^{k+1}-u^k).
\]
Together with $\mathcal M=\partial\Phi$, \eqref{eq:progressive-constrained-inclusion} shows that
$(x^{k+1},\widehat u^{k+1})$ is obtained by solving, for each $j$, the independent
minimization problem
\begin{equation*}\label{eq:almpda-all-variable-subproblem}
  \min_{x_j,u_j}\left\{
    f_j(x_j)+\delta_{\{0\}}(L_jx_j-b_j+u_j)
    -\inner{y^k}{u_j}
    +\frac12\norm{x_j-x_j^k}_{Q_j}^2
    +\frac r2\norm{u_j-u_j^k}^2
  \right\},
\end{equation*}
which can be recast into the following augmented Lagrangian subproblem
\begin{equation*}\label{eq:almpda-local-subproblem}
  \min_{x_j}\left\{
    f_j(x_j)+\inner{y^k}{L_jx_j-b_j}
    +\frac r2\norm{L_jx_j-b_j+u_j^k}^2
    +\frac12\norm{x_j-x_j^k}_{Q_j}^2
  \right\}.
\end{equation*}
The choice of $Q_j$ in  \eqref{eq:almpda-local-metric} further implies that the above subproblem is strongly convex and its unique minimizer $x_j^{k+1}$ satisfies
\[
  0\in\partial f_j(x_j^{k+1})+\tau_j(x_j^{k+1}-x_j^k)
       +L_j^*\bigl[y^k+r(L_jx_j^k-b_j+u_j^k)\bigr].
\]
This results in precisely the proximal update of $x_j^{k+1}$ in \eqref{eq:almpda-local-update}.
Then, for $j=1,\ldots,m$,
\[
  \widehat u_j^{k+1}=-a_j^{k+1},\quad
  \widehat y_j^{k+1} =y^k+r(a_j^{k+1}+u_j^k) \mbox{ with } a_j^{k+1} = L_j x_j^{k+1} - b_j,
\]
and 
\[
  x^{\#,k+1}
  =\bigl(x^{k+1},P_{U_0}\widehat u^{k+1}
                      +P_{U_0^\perp}\widehat y^{k+1}\bigr)
  =\bigl(x^{k+1},u^{k+1}+\iota (y^{k+1})\bigr),
\]
where
\[
  u_j^{k+1}=-a_j^{k+1} +  \frac{1}{m}\sum_{\ell=1}^m a_\ell^{k+1} ,
  \qquad
  y^{k+1}=\frac1m\sum_j\widehat y_j^{k+1}
         =y^k+\frac{r}{m}\sum_{\ell=1}^m a_\ell^{k+1}.
\]
We thus recover the updates in \eqref{eq:almpda-local-update}.
\end{proof}

\section{Conclusion}\label{sec:conclusion}

In this paper, we have developed an active--completion view of semidefinite PPA. A semidefinite proximal term is selective rather than merely degenerate: the proximal dynamics take place only on the active space, while the remaining directions are handled through compatible completions, which need not be unique.
This perspective arises naturally in inf-projection, augmented Lagrangian, and progressive decoupling constructions. In this sense, semidefinite PPA provides a way to adapt proximal regularization to the structure of the problem, rather than simply a singular-metric version of the classical PPA.

\end{document}